\tikzstyle{vertex} = [circle, draw, fill, minimum size=4pt, inner sep=0pt]
\newtheorem{theorem}{Theorem}
\newtheorem{proposition}[theorem]{Proposition}
\newtheorem{lemma}[theorem]{Lemma}
\theoremstyle{definition}
\newtheorem{definition}[theorem]{Definition}
\newtheorem{remark}[theorem]{Remark}
\newtheorem{example}[theorem]{Example}
\definecolor{ggreen}{RGB}{0, 94, 25}
\newcommand{\can}{{\operatorname{can}}}
\newcommand{\supp}{{\operatorname{supp}}}
\newcommand*\angles[1]{\langle #1 \rangle}
\newcommand*\prnths[1]{\left( #1 \right)}
\newcommand*\N{\mathbb{N}}
\newcommand*\Z{\mathbb{Z}}
\newcommand*\R{\mathbb{R}}
\newcommand*\G{\Gamma}
\newcommand*\bq{\textbf{q}}
\newcommand*\eD{\epsilon_D}
\newcommand{\Cm}{\mathrm{C}}
\renewcommand{\Im}{\mathrm{I}}
\newcommand{\Jm}{\mathrm{J}}
\newcommand{\Lm}{\mathrm{L}}
\newcommand{\Mm}{\mathrm{M}}
\newcommand{\Zm}{\mathrm{Z}}
\newcommand*\varhrulefill[1][3pt]{\leavevmode\leaders\hrule height#1\hfill\kern0pt}
\let \origcite \cite
\def \cite[#1]#2{\textbf{\origcite[#1]{#2}}}
\renewcommand{\section}{\@startsection {section}{1}{\z@}
	{-3.5ex \@plus -1ex \@minus -.2ex}
	{2.3ex \@plus .2ex}
	{\Large \bfseries \filcenter}}
\definecolor{lightyellow}{RGB}{255, 255, 197}
\title{\bf{Computation of Admissible Arakelov--Green \\ Functions on Metrized Graphs}}
\author{Ruben Merlijn van Dijk and Enis Kaya}
\begin{document}

\maketitle

\begin{abstract}
Metrized graphs are nonarchimedean analogues of Riemann surfaces, and Arakelov--Green functions on these graphs are of fundamental importance for some aspects of arithmetic geometry. In the present paper, we give an explicit formula for an admissible Arakelov--Green function on a metrized graph, extending Cinkir's formula for the canonical Arakelov--Green function. Based on our formula, we present and implement an algorithm in the computer algebra system \texttt{SageMath} for explicitly computing such functions. We illustrate our algorithm with computational examples.
\end{abstract}

\setcounter{tocdepth}{1}
\tableofcontents

\section{Introduction}

Metrized graphs, also referred to as metric graphs, are finite and connected graphs whose edges are viewed as closed line segments. This definition allows one to define a distance function, and therefore to do analysis, on metrized graphs. In other words, metrized graphs are not just combinatorial objects, but also analytic objects.

Metrized graphs appear in many areas of mathematics such as arithmetic geometry \cite[]{RumelyBook,ChinburgRumely,Zhang1993}, tropical geometry \cite[]{BakerFaber11,ABBR15I,ABBR15II,BakerJensen2016}, algebra \cite[]{Vogtmann2002} and analysis \cite[]{BakerRumely}. Apart from mathematics, they also appear, sometimes with different names, in several areas of science such as physics, chemistry, biology and engineering; see, for instance,  \cite[Section~1.9]{BakerRumely} and the references therein. Our motivation for the present paper comes from arithmetic and tropical geometry. In these contexts, the main examples of metrized graphs arise from the dual graphs associated to the special fibres of semistable models of algebraic curves. 

\bigskip

In \cite[]{RumelyBook,ChinburgRumely,Zhang1993}, metrized graphs were introduced as nonarchimedean analogues of Riemann surfaces. We, in particular, have \textit{Arakelov--Green functions} on such graphs. These functions play key roles in many articles such as \cite[]{RumelyBook,ChinburgRumely,Zhang1993,Moriwaki3,Moriwaki2,BakerRumely,Zhang2010,CinkirEffBog,deJong2018,deJong2019,Wilms19,Xinyi21,LooperSilvermanWilms,AminiNicolussiI,AminiNicolussiII,AminiNicolussiIII}. Therefore, it is desirable to have an efficient method for computing Arakelov--Green functions on metrized graphs. The first step in this direction was taken by Cinkir in \cite[]{Cinkir2014}: he gave an explicit formula for the \textit{canonical} Arakelov--Green function, which led to an efficient algorithm for explicit computations. There are two main goals of the current paper\footnote{This paper is based on the first-named author's BSc thesis \cite[]{vanDijkThesis}, which was (unofficially) supervised by the second-named author.}:
\begin{enumerate}
    \item Firstly, we extend Cinkir's formula in order to deal with any \textit{admissible} Arakelov--Green function, of which the canonical Arakelov--Green function is a special case.
    \item Secondly, we devise an efficient algorithm for computing admissible Arakelov--Green functions using our formula, and provide its implementation in \texttt{SageMath} \cite[]{Sage}.
\end{enumerate}
Let us make this more precise.

Each well-behaved measure $\mu$ on a metrized graph $\G$ gives rise to an Arakelov--Green function $g_{\mu}$ on $\G\times\G$. The canonical Arakelov--Green function $g_{\mu_\can}$ is the one coming from the \textit{canonical} measure $\mu_\can$ discovered by Chinburg and Rumely in \cite[]{ChinburgRumely}. Baker and Rumely \cite[]{BakerRumely} showed that
\[g_{\mu_\can}(x,y) = \tau(\G) - \frac{1}{2}r(x,y),\ \ \ x,y\in\G\]
where $\tau(\G)$ is the \textit{tau constant} of $\G$, a fundamental invariant of $\G$, and $r(x,y)$ is the \textit{resistance function} on $\G$. Thanks to \cite[]{Cinkir2011}, computing $\tau(\G)$ is quite easy. The function $r(x,y)$ (and thus $g_{\mu_\can}(x,y)$) was made explicit by Cinkir in \cite[]{Cinkir2014} using electric circuit theory.

On the other hand, the admissible Arakelov--Green function $g_{\mu_D}$ is the one coming from the \textit{admissible} measure\footnote{This is usually called the admissible metric in the literature. Our terminology is not standard, but it seems to be convenient.} $\mu_D$ constructed by Zhang in \cite[]{Zhang1993}, where $D$ is a divisor on $\G$. Setting $D = 0$ recovers the ``canonical'' setting, so $g_{\mu_{D}}$ is a generalization of $g_{\mu_{\can}}$. In this paper, we make the function $g_{\mu_{D}}$ explicit. To do so, we closely follow Cinkir's strategy. We first introduce a function $\tau_D$ on $\G\times\G$; this is what we call the \textit{tau function} of $\G$ (\cref{def:TauFun}). We then prove that
\[g_{\mu_D}(x,y) = \tau_D(x,y) - \frac{1}{2}r(x,y),\ \ \ x,y\in\G\]
(\cref{prop:gD_tauD}). Then we make the function $\tau_D(x,y)$ explicit (\cref{prop:kappa}). And finally, making use of the explicit expression for $r(x,y)$ provided by Cinkir in \cite[]{Cinkir2014}, we give an explicit formula for $g_{\mu_{D}}$ (\cref{thm:MainThm1} and \cref{thm:MainThm2}).

Our explicit formula allows us to give an efficient algorithm for computing admissible Arakelov--Green functions $g_{\mu_D}$ for every divisor $D$ (Algorithms~\ref{alg:ConnectivityMatrix}-\ref{alg:rD}-\ref{alg:ValueMatrix}). As an application, we give an algorithm to compute the \textit{epsilon invariant} $\eD$ (\cref{alg:Epsilon}). This constant appears in effective Bogomolov's conjectures and is defined in terms of the function $g_{\mu_D}$. We furthermore provide an implementation of these algorithms in \texttt{SageMath}. Our code can be found at 
\begin{equation}\label{eq:code}
\text{\url{https://github.com/KayaEnis/ArakelovGreen.git}}.
\end{equation}
Finally, we provide a number of examples that illustrate our algorithms (\cref{sec:CompExa}).

\subsection{Outline and notation}

A brief outline of the paper is as follows. \cref{sec:Prel} is devoted to some of the basic definitions in the theory of metrized graphs and a summary of a few essential results of Cinkir \cite[]{Cinkir2014}. In \cref{sec:ArGrFc}, after giving the definition of the Arakelov--Green function $g_{\mu_D}$, we derive an explicit formula for it. Our algorithms, in particular the one for computing the Arakelov--Green function $g_{\mu_D}$, are presented in \cref{sec:Alg}. Computational examples are given in \cref{sec:CompExa}.

\bigskip

For the convenience of the reader, we give the following table which gives the notations for the central objects of the paper:
\begin{center}
	\begin{tabular}{rcl}
	    $\G$         &--&  A metrized graph.	 \\
		$V(\G)$  	 &--&  A vertex set for $\G$.  \\
		$E(\G)$		&--&  The corresponding edge set for $\G$. \\
		$\Lm$ &--&  The discrete Laplacian matrix of $\G$. \\
		$\Lm^+$ &--&  The Moore--Penrose generalized inverse of $\Lm$. \\
		$j$ &--&  The voltage function on $\G$.\\
		$r$ &--&  The resistance function on $\G$. \\
		$\Cm$ &--&  The connectivity matrix of $\G$. \\
		$\mu_\can$  &--&  The canonical measure on $\G$. \\
		$g_{\mu_\can}$  &--&  The Arakelov--Green function corresponding to $\mu_\can$.\\
		$\tau(\G)$  &--&  The tau constant of $\G$. \\
		$D$  &--&  A divisor on $\G$. \\
		$\mu_D$  &--&  The admissible measure on $\G$ with respect to $D$.\\
		$g_{\mu_D}$  &--& The Arakelov--Green function corresponding to $\mu_D$. \\
		$\tau_D$  &--&  The tau function of $\G$ with respect to $D$. \\
		$\Zm_D$  &--&  The value matrix of $g_{\mu_D}$. 
	\end{tabular}
\end{center}

\begin{center} \textbf{Acknowledgements} \end{center}

We would like to express our thanks to Steffen M{\"u}ller for his encouragement and support during our work on this project. Besides, we thank him, Omid Amini, Robin de Jong and Oliver Lorscheid for their helpful comments on an early draft.

\section{Preliminaries}
\label{sec:Prel}

In this section, we introduce notations and assumptions used throughout this paper and collect some auxiliary results. We will mostly follow Cinkir \cite[]{Cinkir2014}. See also \cite[]{BakerFaber,BakerRumely,CinkirThesis,BakerRumelyBook}.

\subsection{Metrized graphs}

There are various equivalent definitions for metrized graphs. We will be using the following.

\begin{definition}{\bf{[Metrized graph]}}
A \textit{metrized graph} is a finite and connected graph $\G$ such that each of its edges is equipped with a parametrization. 
\end{definition}

Let $\G$ be a metrized graph. The \textit{valence} of a point $p$ on $\G$ is defined as the number of directions emanating from $p$ and denoted by $v(p)$. Let $V(\G)$ be a vertex set for $\G$ such that
\begin{enumerate}
    \item $V(\G)$ is nonempty and finite, and
    \item any point on $\G$ of valence different from $2$ belongs to $V(\G)$.
\end{enumerate}
We remark that $V(\G)$ is not unique, and when needed, we can enlarge it by considering some of the points of valence $2$ as vertices; this flexibility will play an important role in the sequel. The corresponding set of edges of $\G$, denoted by $E(\G)$, is defined as the set of closed line segments with endpoints in $V(\G)$. Every edge has a length, and the \textit{total length} of $\G$ is defined to be
\[\ell(\G) \coloneqq \sum_{e\in E(\G)} (\text{length of } e) \in \R_{>0}.\]

For an edge $e\in E(\G)$, the graph obtained from $\G$ by deleting the interior of $e$ is denoted by $\G - e$. Note that this graph may not be connected. This motivates the following definition.

\begin{definition}{\bf{[Bridge]}}
\label{def:Bridge}
An edge $e\in E(\G)$ is called a \textit{bridge} if the graph $\G - e$ is disconnected. For a bridge $e$ with endpoints $p$ and $q$, we will denote the subgraph of $\G - e$ containing $p$ (resp. $q$) by $\G_p$ (resp. $\G_q$).
\end{definition}

Let us look at an example. Consider the metrized graph $\G$ depicted in \cref{fig:CircleLine}. It has three vertices, $p_1,p_2,p_3$, and three edges of lengths $a,b,c$. The total length of $\G$ is $\ell(\G) = a + b + c$. There is only one bridge, namely the edge connecting $p_2$ and $p_3$.

\begin{figure}[h]
	\centering
	\begin{tikzpicture}
		\def \radius {1cm}

        \filldraw (-1,0) circle (2pt);
        \filldraw (1,0) circle (2pt);
        \filldraw (3,0) circle (2pt);

		\node at ({180}:1.4\radius) {$p_1$};
		\node at ({360}:.6\radius) {$p_2$};
		\node at ({360}:3.4\radius) {$p_3$};

		\draw ({0}:\radius) arc ({0}:{180}:\radius) node[midway,above]{$a$};
		\draw ({180}:\radius) arc ({180}:{360}:\radius) node[midway,below]{$b$};
		\draw (1,0) -- (3,0);
	\end{tikzpicture}
	\caption{A metrized graph consisting of a circle and a line segment.}\label{fig:CircleLine}
\end{figure}
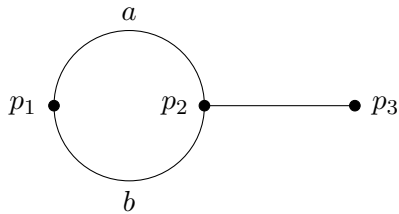

From now on, we assume that the vertex set $V(\G)$ is \textit{adequate} (sometimes called \textit{optimal}). That is to say, there are neither parallel edges nor loops in our graph $\G$. Notice that this can be achieved quite easily by enlarging the vertex set suitably, if necessary. For instance, the vertex set $V(\G) = \{p_1,p_2,p_3\}$ for the graph $\G$ in \cref{fig:CircleLine} is not adequate, since there are two parallel edges. An adequate vertex set can be obtained by adjoining one additional point $p_0$ to the given vertex set; see Figure \ref{fig:CircleLineAdeq}. Notice that the total length is unchanged.

\begin{figure}[h]
	\centering
		\begin{tikzpicture}
			\node[vertex, label=above:$p_0$] (0) at (0,2) {};
			\node[vertex, label=left:$p_1$] (1) at (-2,0) {};
			\node[vertex, label=below:$p_2$] (2) at (2,0) {};
			\node[vertex, label=right:$p_3$] (3) at (5,0) {};

			\draw [postaction=decorate] (0) -- (1) node[pos=.4,left=0.2]{$\frac{a}{2}$};
			\draw [postaction=decorate] (0) -- (2) node[pos=.4,right=0.2]{$\frac{a}{2}$};
			\draw [postaction=decorate] (1) -- (2) node[pos=.5,below=0.1]{$b$};
			\draw [postaction=decorate] (2) -- (3) node[pos=.5,below=0.1]{$c$};
		\end{tikzpicture}
	\caption{The metrized graph in Figure \ref{fig:CircleLine} with an adequate vertex set.}\label{fig:CircleLineAdeq}
\end{figure}
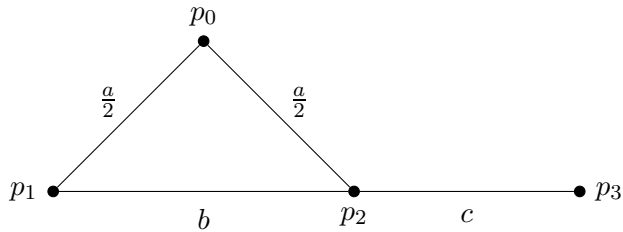

We now define the \textit{discrete Laplacian matrix} of $\G$. Fix an ordering of the vertex set $V(\G)$, and let $n$ be the number of vertices.

\begin{definition}{\bf{[Discrete Laplacian matrix]}}
The matrix $\Lm = (l_{pq})_{n\times n}$, where
\[l_{pq} = 
\begin{dcases}
\omit\hfil$0$\hfil &\text{if $p\neq q$, and they are not connected by an edge}, \\
\omit\hfil$-\ \dfrac{1}{\text{length of } e}$\hfil &\text{if $p\neq q$, and  they are connected by an edge $e$}, \\
-\sum_{s\in V(\G)\setminus\{p\}} l_{ps} &\text{if $p=q$},
\end{dcases}\]
is the
\textit{discrete Laplacian matrix}\footnote{Since the vertex set is adequate, there can be at most one edge between two distinct points; therefore, the discrete Laplacian matrix is well-defined.} of $\G$.
\end{definition}

The matrix $\Lm$ is not invertible, but has the Moore--Penrose generalized inverse $\Lm^{+} = (l_{pq}^+)_{n\times n}$, which can be easily computed via the formula
\begin{equation}\label{eq:Lplus}
\Lm^{+} = \prnths{\Lm - \frac{1}{n}\Jm_n}^{-1} + \frac{1}{n}\Jm_n
\end{equation}
where $\Jm_n$ is the $n\times n$ matrix whose entries are all $1$; see \cite[Chapter~10]{RaoMitra}. We remark that $\Lm^{+}$ is the unique matrix such that
\[ \Lm\Lm^{+}\Lm = \Lm, \ \ \ \Lm^{+}\Lm\Lm^{+} = \Lm^{+}, \ \ \ (\Lm\Lm^{+})^T = \Lm\Lm^{+} \ \ \ \text{and} \ \ \ (\Lm^{+}\Lm)^T = \Lm^{+}\Lm.\]
Here, $\Mm^T$ denotes the transpose of a matrix $\Mm$.

\subsection{Voltage and resistance functions on metrized graphs}

Let $\G$ be a metrized graph with an adequate vertex set $V(\G)$. In \cite[Section~2]{ChinburgRumely}, Chinburg and Rumely defined and studied a function $j_z(x,y)$ on $\G$, which gives a fundamental solution of the \textit{Laplacian operator}, and enjoys the following key properties:
\begin{enumerate}
    \item It is jointly continuous as a function of $x,y$ and $z$.
    \item It is symmetric in $x$ and $y$; that is, $j_z(x,y) = j_z(y,x)$ for all $x,y,z\in\G$.
    \item It is nonnegative with $j_x(x,y) = j_y(x,y) = 0$ for all $x,y\in\G$.
\end{enumerate}
See also \cite[Section~6]{BakerFaber} and \cite[Section~6]{BakerRumely}. This function is called the \textit{voltage function} on $\G$ because of its physical interpretation; see, for example, \cite[page~762]{Cinkir2014} and \cite[Section~1.5]{BakerRumely}.

\begin{definition}{\bf{[Resistance function]}}
The function
\[r(x,y) \coloneqq j_y(x,x),\ \ \ x,y\in\G \]
is called the \textit{(effective) resistance function} on $\G$.
\end{definition}

It is quite easy to compute the voltage and resistance functions at vertices of $\G$, as shown in the following lemma.

\begin{lemma}\emph{(\cite[Lemmas~3.4 and 3.5]{CinkirFoster})}
\label{lem:jr_vertices}
Fix an ordering of $V(\G)$ and let $n$ be the number of vertices. Denote the Moore--Penrose generalized inverse of the discrete Laplacian matrix of $\G$ by $\Lm^{+} = (l_{pq}^+)_{n\times n}$. Then, for any $p,q,s\in V(\G)$, we have
\[j_s(p,q) = l_{ss}^{+} - l_{sp}^{+} - l_{sq}^{+} + l_{pq}^{+}\ \ \text{and} \ \ r(p,q) = l_{pp}^{+} - 2l_{pq}^{+} + l_{qq}^{+}.\]
\end{lemma}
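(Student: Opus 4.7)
The plan is to reduce this to a finite-dimensional linear algebra computation on $V(\G)$, using the defining property of the voltage function together with the characterizing properties of the Moore--Penrose inverse.

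First, I would recall that $j_s(\scdot,q)$ is characterized, up to a normalization, as the unique fundamental solution of the Laplacian on $\G$ with source $q$ and sink $s$: namely $\Delta_x j_s(x,q) = \delta_q - \delta_s$ in the distributional sense, subject to $j_s(s,q)=0$. When one restricts this equation to an adequate vertex set $V(\G)$ (enlarging it, if necessary, so that $p$, $q$, $s$ are all vertices), the continuous Laplacian restricts to the discrete Laplacian $\Lm$. Concretely, writing $\mathbf{u}\in\R^{V(\G)}$ for the vector with entries $\mathbf{u}(p) = j_s(p,q)$, the equation becomes
\[
\Lm\,\mathbf{u} \;=\; \mathbf{e}_q - \mathbf{e}_s, \qquad \mathbf{u}(s) = 0,
\]
where $\mathbf{e}_v$ is the standard basis vector at $v\in V(\G)$. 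Carrying this reduction out carefully is the one place where some care is needed, but it is the standard discrete-versus-continuous comparison (see, e.g., the references to Baker--Faber, Baker--Rumely or Cinkir cited in the paper).

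Second, I would solve the linear system. Since $\G$ is connected, $\ker\Lm = \R\,\mathbf{1}$ and $\mathrm{image}(\Lm) = \mathbf{1}^\perp$. The right-hand side $\mathbf{e}_q-\mathbf{e}_s$ has coordinate sum zero, hence lies in $\mathrm{image}(\Lm)$, so $\Lm\Lm^{+}(\mathbf{e}_q-\mathbf{e}_s) = \mathbf{e}_q-\mathbf{e}_s$ by the defining relation $\Lm\Lm^{+}\Lm = \Lm$. Therefore every solution has the form
\[
\mathbf{u} \;=\; \Lm^{+}(\mathbf{e}_q - \mathbf{e}_s) \;+\; c\,\mathbf{1}, \qquad c\in\R.
\]
Reading off the $p$-th entry gives $\mathbf{u}(p) = l_{pq}^{+} - l_{ps}^{+} + c$, and the normalization $\mathbf{u}(s)=0$ forces $c = l_{ss}^{+} - l_{sq}^{+}$. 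Hence
\[
j_s(p,q) \;=\; l_{pq}^{+} - l_{ps}^{+} + l_{ss}^{+} - l_{sq}^{+} \;=\; l_{ss}^{+} - l_{sp}^{+} - l_{sq}^{+} + l_{pq}^{+},
\]
where in the last step I use that $\Lm^{+}$ is symmetric (this follows from the two symmetry relations $(\Lm\Lm^{+})^T = \Lm\Lm^{+}$ and $(\Lm^{+}\Lm)^T = \Lm^{+}\Lm$ together with the symmetry of $\Lm$).

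Third, the resistance formula is just a substitution. By the definition $r(x,y) = j_y(x,x)$, one has
\[
r(p,q) \;=\; j_q(p,p) \;=\; l_{qq}^{+} - l_{qp}^{+} - l_{qp}^{+} + l_{pp}^{+} \;=\; l_{pp}^{+} - 2l_{pq}^{+} + l_{qq}^{+}.
\]
The only genuine obstacle is the first step — correctly translating the continuous characterization of $j_s(x,q)$ into the vertex-level equation $\Lm\mathbf{u} = \mathbf{e}_q - \mathbf{e}_s$ — but this is a standard fact for metrized graphs and, as noted, the result is taken from CinkirFoster.
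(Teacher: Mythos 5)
Your argument is correct and is essentially the standard derivation: the paper itself gives no proof of this lemma, quoting it directly from Cinkir--Foster, and what you write is precisely the argument underlying that reference (restrict the fundamental-solution characterization $\Delta_x j_s(x,q)=\delta_q-\delta_s$, $j_s(s,q)=0$ to the vertex set, solve $\Lm\mathbf{u}=\mathbf{e}_q-\mathbf{e}_s$ via $\Lm^{+}$ using that $\mathbf{e}_q-\mathbf{e}_s\perp\mathbf{1}$, fix the constant by the normalization, and specialize to get $r$). The only step you rightly flag as needing care --- that $j_s(\cdot,q)$ is edgewise linear when $q,s\in V(\G)$, so the continuous Laplacian restricts to $\Lm$ on vertex values --- is indeed the standard discrete-versus-continuous comparison, and the rest (symmetry of $\Lm^{+}$, the value of $c$) checks out.
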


The following two lemmas are due to Cinkir, and they give a method to compute the resistance function at more general points on $\G$. In the equalities below, for points $x$ and $y$ in $\G$, we see algebraic expressions in $x$ and $y$ such as $xy$, $x-y$ and $\left|x-y\right|$. In these expressions, by $x$ and $y$ we mean the real numbers corresponding to the points $x$ and $y$ on $\G$ via the parametrization. For instance, if $x$ belongs to an edge $e$ of length $L$, then under the identification of $e$ with the closed interval $[0,L]$, we have $x\in [0,L]$. This convention will be used in the rest of the paper.

\begin{lemma}\emph{(\cite[Lemma~3.1 and Remark~3.4(a)]{Cinkir2014})}
\label{lem:rxy_oneedge}
Let $x,y\in \G$. If both $x$ and $y$ belong to the same edge $e$ of length $L$ with endpoints $p$ and $q$, then
\[r(x,y) = \left|x-y\right| - (x-y)^2\frac{L - r(p,q)}{L^2}.\]
Moreover, if $e$ is a bridge, then
\[r(x,y) = \left|x-y\right|.\]
\end{lemma}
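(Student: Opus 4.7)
The plan is to use the electric network interpretation of the resistance function: view $\G$ as a circuit in which each edge of length $\ell$ is a resistor of resistance $\ell$, so that $r(\cdot,\cdot)$ coincides with effective resistance (this identification is standard and justifies the series/parallel reductions used below). I would begin by enlarging the vertex set to include $x$ and $y$, so that the edge $e$ is subdivided into three segments: $[p,x]$ of length $\min(x,y)$, $[x,y]$ of length $|x-y|$, and $[y,q]$ of length $L-\max(x,y)$. Without loss of generality assume $0\le x\le y\le L$ under the parametrization $e\simeq [0,L]$ with $p=0$, $q=L$.

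Next, let $\G'$ denote the subgraph of $\G$ obtained by removing the interior of $e$, and let $R$ be the effective resistance from $p$ to $q$ in $\G'$ (with $R=\infty$ when $e$ is a bridge). Since $\G'$ interacts with the subdivided segments only through the terminals $p$ and $q$, the two-terminal network $\G'$ may be replaced by a single resistor of resistance $R$ between $p$ and $q$ without affecting $r(x,y)$. The reduced circuit between $x$ and $y$ then consists of two parallel paths: the direct segment $[x,y]$ of resistance $y-x$, and the indirect series path $x\to p \to q \to y$ of resistance $x+R+(L-y)$. The parallel rule yields
\[
r(x,y) \;=\; \frac{(y-x)\bigl(L+R-(y-x)\bigr)}{L+R} \;=\; (y-x) - \frac{(y-x)^2}{L+R}.
\]

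Finally, applying the same reduction to the pair $(p,q)$ itself shows that $r(p,q)$ is the parallel combination of $e$ (resistance $L$) and $\G'$ (resistance $R$), giving $r(p,q) = LR/(L+R)$, whence $L+R = L^2/(L-r(p,q))$ and therefore $1/(L+R) = (L-r(p,q))/L^2$. Substituting into the displayed formula gives the claimed expression for $r(x,y)$. For the bridge case, $\G'$ is disconnected so $R=\infty$ (equivalently, $r(p,q)=L$), and the second term vanishes, leaving $r(x,y)=|x-y|$; this also follows directly from the observation that, when $e$ is a bridge, the only path between $x$ and $y$ in the subdivided graph runs along $[x,y]$. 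The main subtlety is the rigorous justification of the two-terminal network reduction, which rests on the fact that the voltage function $j_{\bullet}(\cdot,\cdot)$ is the unique solution to a boundary value problem for the Laplacian; standard circuit theory can then be invoked to validate the parallel/series manipulations.
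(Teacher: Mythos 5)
Your argument is correct, and it is essentially the proof of the cited source: the paper does not prove this lemma itself but imports it from Cinkir's Lemma~3.1 and Remark~3.4(a), whose derivation is exactly this circuit-reduction computation --- replace the complement of the interior of $e$ by a single resistor of effective resistance $R$ between $p$ and $q$, reduce the resulting four-arc cycle by the parallel rule to get $r(x,y)=(y-x)-\frac{(y-x)^2}{L+R}$, and eliminate $R$ via $r(p,q)=\frac{LR}{L+R}$, with the bridge case handled as $R=\infty$ (equivalently $r(p,q)=L$). The one point you rightly flag as needing justification, the validity of the two-terminal reduction for the voltage/resistance functions on metrized graphs, is precisely what Cinkir's earlier work on circuit reductions supplies, so your proposal is a faithful reconstruction rather than a genuinely different route.
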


\begin{lemma}\emph{(\cite[Theorem~3.3 and Remark~3.4(c)(d)]{Cinkir2014})}
\label{lem:rxy_twoedges}
Let $x,y\in \G$ belong to edges $e_i$ and $e_j$ respectively, and $e_i\neq e_j$. Call the lengths $L_i, L_j$ and the endpoints $p_i,q_i,p_j,q_j$. We then have
\begin{align*}
r(x,y) = &-x^2\frac{L_i - r(p_i,q_i)}{L_i^2} - y^2\frac{L_j - r(p_j,q_j)}{L_j^2} + \frac{2xy}{L_iL_j}\prnths{j_{p_j}(p_i,q_j) - j_{p_j}(q_i,q_j)}\\ 
&+ \frac{x}{L_i}\prnths{L_i - 2j_{p_i}(q_i,p_j)} + \frac{y}{L_j}\prnths{L_j - 2j_{p_j}(p_i,q_j)} + r(p_i,p_j). 
\end{align*}
Now suppose at least one of $e_i$ and $e_j$ is a bridge.
\begin{enumerate}
	\item If both $e_i$ and $e_j$ are bridges, then
	\[r(x,y) = 
	\begin{dcases}
	\omit\hfil$ x + y + r(p_i,p_j)$\hfil &\text{if $p_i$ and $p_j$ are between $x$ and $y$}, \\
	\omit\hfil$ x - y + L_j + r(p_i,q_j)$\hfil &\text{if $p_i$ and $q_j$ are between $x$ and $y$}, \\
	\omit\hfil$ - x + y + L_i + r(q_i,p_j)$\hfil &\text{if $q_i$ and $p_j$ are between $x$ and $y$}, \\
	- x - y + L_i + L_j + r(q_i,q_j) &\text{if $q_i$ and $q_j$ are between $x$ and $y$}.
	\end{dcases}\]
	
	\item If only $e_i$ is a bridge (the other case can be handled in a completely analogous way), then
	\[r(x,y) = 
	\begin{dcases}
	\omit\hfil$-y^2\dfrac{L_j - r(p_j,q_j)}{L_j^2} + T_1$\hfil &\text{if $y\in \G_{p_i}$}, \\
	\omit\hfil$-y^2\dfrac{L_j - r(p_j,q_j)}{L_j^2} + T_2$\hfil &\text{if $y\in \G_{q_i}$},
	\end{dcases}\]
	where
	\begin{equation}
	\begin{aligned}\label{eq:T1T2}
	T_1 &= y\frac{L_j - r(p_j,q_j) + r(p_i,q_j) - r(p_i,p_j)}{L_j} + x + r(p_i,p_j),\\
	T_2 &= y\frac{L_j - r(p_j,q_j) + r(q_i,q_j) - r(q_i,p_j)}{L_j} - x + L_i + r(q_i,p_j).
	\end{aligned}
	\end{equation}
\end{enumerate}
\end{lemma}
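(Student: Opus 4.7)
The plan is to reduce to one-edge computations via the circuit-theoretic identity
\[r(x,y) = r(x,z) + r(y,z) - 2j_z(x,y),\]
valid for all $x,y,z\in\G$ and a straightforward consequence of the defining properties of $j_z$ listed above. Taking pivot $z = p_j$, it remains to express $r(x,p_j)$, $r(y,p_j)$, and the voltage $j_{p_j}(x,y)$ explicitly and then collect terms.

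The term $r(y,p_j)$ comes immediately from \cref{lem:rxy_oneedge} applied to $e_j$, with $p_j$ parametrized as the origin. For $r(x,p_j)$, I apply the same identity once more with pivot $p_i$,
\[r(x,p_j) = r(x,p_i) + r(p_i,p_j) - 2j_{p_i}(x,p_j),\]
where $r(x,p_i)$ again follows from \cref{lem:rxy_oneedge} on $e_i$, and $j_{p_i}(x,p_j)$ is affine in $x$ on $e_i$: as a function of $x$ it is harmonic on $e_i$ since neither the pivot $p_i$ nor the fixed point $p_j \notin e_i$ lies in the interior, so it interpolates linearly between $j_{p_i}(p_i,p_j)=0$ and $j_{p_i}(q_i,p_j)$.

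The crux is the mixed term $j_{p_j}(x,y)$, which I would compute by two nested affine interpolations. For fixed $y\in e_j$, the map $x\mapsto j_{p_j}(x,y)$ is harmonic on $e_i$ (the pivot $p_j$ and the point $y$ both lie outside $e_i$), hence affine, and so
\[j_{p_j}(x,y) = j_{p_j}(p_i,y) + \frac{x}{L_i}\bigl(j_{p_j}(q_i,y) - j_{p_j}(p_i,y)\bigr).\]
In turn, each of $y\mapsto j_{p_j}(p_i,y)$ and $y\mapsto j_{p_j}(q_i,y)$ is affine in $y$ on $e_j$ — the first argument is outside $e_j$, so harmonicity holds on the open edge and extends through the endpoint $p_j$ by continuity together with the vanishing $j_{p_j}(\cdot,p_j)=0$. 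Substituting the resulting bilinear expression back into the main identity and grouping the $x^2$, $y^2$, $xy$, $x$, $y$, and constant contributions gives precisely the displayed formula.

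For the bridge cases I would exploit that removing a bridge disconnects $\G$, so that any unit current from $x$ to $y$ must traverse that bridge. When both $e_i$ and $e_j$ are bridges, each of the four configurations singles out the unique pair of endpoints $(u,v)$ through which the $x$-to-$y$ path exits the two edges, and then $r(x,y) = |x-u| + r(u,v) + |y-v|$ by the bridge part of \cref{lem:rxy_oneedge}. When only $e_i$ is a bridge, the same decomposition forces $r(x,y) = x + r(p_i,y)$ or $(L_i - x) + r(q_i,y)$ according to whether $y\in\G_{p_i}$ or $y\in\G_{q_i}$, and substituting the general two-edge formula just derived for $r(p_i,y)$ or $r(q_i,y)$ — together with the identity $2j_{p_j}(p_i,q_j) = r(p_i,p_j) + r(p_j,q_j) - r(p_i,q_j)$ to rewrite the remaining voltage value in terms of resistances — produces exactly the expressions $T_1$ and $T_2$. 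I expect the main obstacle to be the bilinear argument for $j_{p_j}(x,y)$: one must carefully track which points are excluded from the interior of the edge being interpolated over in each of the two variables, and verify that the apparent singularity of the pivot $p_j$ at an endpoint of $e_j$ is actually removed by the boundary vanishing of $j_{p_j}$.
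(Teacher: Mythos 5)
Your proposal is correct, but there is nothing in the paper to compare it against: the paper imports this statement verbatim from Cinkir (\cite[Theorem~3.3 and Remark~3.4(c)(d)]{Cinkir2014}) and offers no proof of its own, so your argument is a genuinely independent derivation rather than a variant of the paper's. The skeleton is sound and I checked that it reproduces every term. The pivot identity $r(x,y)=r(x,z)+r(y,z)-2j_z(x,y)$ is exactly the relation the paper itself quotes in the proof of \cref{prop:gD_tauD}, and with $z=p_j$ the three pieces you compute --- $r(y,p_j)$ from \cref{lem:rxy_oneedge}, $r(x,p_j)$ via a second pivot at $p_i$ with $j_{p_i}(x,p_j)=\tfrac{x}{L_i}j_{p_i}(q_i,p_j)$, and the bilinear interpolation of $j_{p_j}(x,y)$ --- assemble precisely into the displayed quadratic, including the sign of the $xy$-coefficient. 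The harmonicity claims are all legitimate: in each interpolation the pole and the source of $\Delta j$ sit at vertices, hence outside the open edge, and the boundary value at the pivot vanishes by $j_y(x,y)=0$; this remains true even when $e_i$ and $e_j$ share an endpoint. The bridge cases also check out, with two small points worth making explicit in a written version: (i) the series law $r(x,y)=r(x,p_i)+r(p_i,y)$ at a cut point $p_i$ should be justified (e.g.\ by noting $j_{p_i}(x,y)=0$ when $p_i$ separates $x$ from $y$, so the pivot identity degenerates to the sum); and (ii) for $T_2$ you need $r(q_i,y)$, i.e.\ the general formula evaluated at $x=L_i$, whose simplification to the stated form requires the identity $2j_{p_j}(q_i,q_j)=r(q_i,p_j)+r(p_j,q_j)-r(q_i,q_j)$ in addition to the one you list for $2j_{p_j}(p_i,q_j)$ --- alternatively, both $T_1$ and $T_2$ drop out directly from \cref{lem:CinkirLemma3.2}, which the paper records for exactly this purpose. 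Cinkir's own proof proceeds instead by circuit reductions of $\Gamma$ relative to the four endpoints; your route via harmonic interpolation of the voltage function is shorter and stays entirely within the properties of $j_z$ already listed in the paper.
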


\bigskip

We also record, for later use, the following lemma.

\begin{lemma}\label{lem:CinkirLemma3.2}
\emph{(\cite[Lemma~3.2 and Remark~3.4(b)]{Cinkir2014})} 
Let $x\in \G$ belong to an edge $e$ of length $L$ with endpoints $p$ and $q$. For any $s\in V(\G)$, we have
\[r(s,x) = -x^2\frac{L - r(p,q)}{L^2} + x\frac{L - r(p,q) + r(s,q) - r(s,p)}{L} + r(s,p).\]
Moreover, if $e$ is a bridge, then
\[r(s,x) = 
\begin{dcases}
\omit\hfil$x + r(s,p)$\hfil &\text{if $s\in \G_p$}, \\
- x + L + r(s,q) &\text{if $s\in \G_q$}.
\end{dcases}\]
\end{lemma}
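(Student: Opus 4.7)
The plan is to compute $r(s,x)$ using classical electrical circuit theory, treating $\G$ as a resistor network. Suppose first that $e$ is not a bridge, so that $\G - e$ remains connected. Viewed as a three-terminal network with terminals $p, q, s$, the graph $\G - e$ may be replaced, without changing the pairwise effective resistances, by an equivalent Y-network centered at an auxiliary node $o$ with branches $op, oq, os$ of resistances $R_p, R_q, R_s$ respectively. Restoring the edge $e$ and subdividing it at $x$ into two sub-edges of lengths $x$ and $L-x$, we obtain a simple five-edge network on which Kirchhoff's and Ohm's laws may be applied directly.

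A routine computation (solving for the currents flowing from $x$ towards $p$ and towards $q$ using the loop equation together with Kirchhoff's node law at $x$) yields
\[
r(s, x) \;=\; \frac{(L - r(p,q))\,(R_q + L - x)(R_p + x)}{L^2} \;+\; R_s,
\]
where the factor $L - r(p,q)$ comes from the parallel-combination identity $r(p,q) = \frac{(R_p+R_q)L}{R_p+R_q+L}$, equivalently $R_p + R_q + L = \frac{L^2}{L - r(p,q)}$. I would then eliminate $R_p, R_q, R_s$ using the further identities
\[
r(s,p) \;=\; R_s + \frac{R_p(L + R_q)}{R_p + R_q + L}, \qquad r(s,q) \;=\; R_s + \frac{R_q(L + R_p)}{R_p + R_q + L},
\]
read off from the same five-edge network, and collect powers of $x$ to arrive at the claimed quadratic formula. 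The constant term collapses exactly to $r(s,p)$, and matching the coefficient of $x$ amounts to recognising that $R_q - R_p = (r(s,q) - r(s,p))(R_p + R_q + L)/L$.

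For the bridge case, the graph $\G - e$ splits into the connected components $\G_p$ and $\G_q$. Any current flowing between $s$ and $x$ must cross the bridge at every point between its appropriate endpoint and $x$, so resistances add along this unique channel: if $s \in \G_p$ then $r(s,x) = r(s,p) + x$, and symmetrically $r(s,x) = r(s,q) + (L-x)$ when $s \in \G_q$. Alternatively, this is the limit of the non-bridge formula as $r(p,q) \to L$.

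The main obstacle is the algebraic bookkeeping in the non-bridge case: one must collapse an expression in the auxiliary variables $R_p, R_q, R_s$ into one depending only on $r(p,q), r(s,p), r(s,q)$. The three identities above give a linear change of variables that makes this tractable, but checking that the coefficient of $x$ simplifies precisely to $(L - r(p,q) + r(s,q) - r(s,p))/L$ requires care.
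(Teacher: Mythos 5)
The paper does not prove this lemma at all: it is imported verbatim from Cinkir (Lemma~3.2 and Remark~3.4(b) of the cited reference), so there is no internal argument to compare against. Your circuit-theoretic derivation is correct, and it is essentially the circuit-reduction proof that underlies Cinkir's original statement. Concretely, replacing $\G - e$ by the equivalent Y-network with branch resistances $R_p, R_q, R_s$ and reattaching $e$ subdivided at $x$, the series/parallel laws give $r(s,x) = R_s + \frac{(R_p+x)(R_q+L-x)}{R_p+R_q+L}$; I verified that your three identities (namely $R_p+R_q+L = \frac{L^2}{L-r(p,q)}$ and the stated expressions for $r(s,p)$ and $r(s,q)$) do collapse this to the claimed quadratic, the key point being that the coefficient of $x$ equals $\frac{L+R_q-R_p}{R_p+R_q+L} = \frac{L - r(p,q)+r(s,q)-r(s,p)}{L}$, exactly as you assert. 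Two small points deserve a sentence in a written-up version: (i) what you need after reattaching $e$ is that the Y-replacement preserves the full three-terminal response of $\G - e$, not merely the three pairwise resistances --- for three terminals these determine each other, but that equivalence should be invoked explicitly, since current flows through all of $p,q,s$ simultaneously when measuring $r(s,x)$; (ii) the degenerate case $s\in\{p,q\}$, where the three terminals are not distinct, should be handled separately (the formula still holds there, e.g.\ by taking $R_s=0$ and $R_p=0$ when $s=p$). The bridge case is fine both via your cut-vertex/series argument and as the specialization $r(p,q)=L$ of the general formula.
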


\subsection{Divisors on metrized graphs}

Let $\G$ be a metrized graph with a vertex set $V(\G)$. A \textit{divisor} $D$ on $\G$ is a finite formal sum of the form 
\[D = \sum_{s\in\G} a_s s\]
where $a_s\in\Z$ for each $s$. Such a divisor is called \textit{effective} if $a_s \geq 0$ for all $s$. Moreover, its \textit{degree} and \textit{support} are defined as
\[\deg(D) \coloneqq \sum_{s\in \G} a_s \ \ \text{and} \ \ \supp(D)\coloneqq \{s\in\G \mid a_s\neq 0\},\]
respectively.

Note that, for any divisor $D$ on $\G$, we can equip $\G$ with a vertex set $V(\G)$ with the property that $\supp(D)\subset V(\G)$. This allows us to express the divisor $D$ in terms of vertices, rather than arbitrary points on $\G$.

\bigskip

Let us conclude this section by introducing \textit{polarized metrized graphs} and their \textit{canonical divisors}; we will need these notions in \cref{subsec:ExBanana}. The following is taken from \cite[Section~4]{CinkirEffBog} (see also \cite[]{Zhang2010,Faber09,Cinkir2015MathComp,Cinkir2015}).

\begin{definition}{\bf{[Canonical divisor \& polarized metrized graph]}}
\label{def:PolMetGr}
Let $\bq\colon \G\to\Z$ be a function supported on $V(\G)$. The divisor
\[K \coloneqq \sum_{s\in V(\G)} \prnths{v(s) - 2 + 2\bq(s)}s\]
is called the \textit{canonical divisor} of $(\G,\bq)$.
The pair $(\G,\bq)$ is called a \textit{polarized metrized graph} if $\bq$ is nonnegative and $K$ is effective.
\end{definition}

\begin{comment}
\begin{definition}{\bf{[Polarized metrized graph \& canonical divisor]}}
\label{def:PolMetGr}
Let $\bq\colon \G\to\Z_{\geq 0}$ be a function that is supported on a subset of $V(\G)$, i.e., 
\begin{enumerate}
	\item for all $s\in \G\setminus V(\G)$ we have $\bq(s) = 0$, and
	\item if $\bq(s)>0$ for some $s\in \G$, then $s\in V(\G)$.
\end{enumerate}
The divisor
\[K \coloneqq \sum_{s\in V(\G)} \prnths{v(s) - 2 + 2\bq(s)}s\]
is called the \textit{canonical divisor} of $(\G,\bq)$. The pair $(\G,\bq)$ is called a \textit{polarized metrized graph} if the divisor $K$ is effective.
\end{definition}
\end{comment}

\section{Arakelov--Green functions}
\label{sec:ArGrFc}

Our main goal in this section is to provide an explicit formula for the Arakelov--Green function $g_{\mu_D}$ on a metrized graph $\G$, extending Cinkir's results \cite[Theorems~4.3 and 4.4]{Cinkir2014}. Here, the function $g_{\mu_D}$ is defined with respect to the measure $\mu_D$, where $D$ is a divisor on $\G$.  

Let $V(\G)$ be an adequate vertex set for the metrized graph $\G$, and write $E(\G)$ for the corresponding edge set. 

\subsection{Arakelov--Green function $g_{\mu_D}$}

\begin{definition}{\bf{[Arakelov--Green function]}}
\label{def:ArGrFc}
Let $\mu$ a real-valued signed Borel measure on $\G$ such that $\mu(\G)$ is $1$ and $|\mu|(\G)$ is finite, and set
\[j_{\mu}(x,y) \coloneqq \int_{\G} j_z(x,y)d\mu(z),\ \ \ x,y\in\G.\]
The corresponding \textit{Arakelov--Green function} is defined as 
\[g_{\mu}(x,y) \coloneqq j_{\mu}(x,y) - \int_{\G^3} j_z(x,y)d\mu(z)d\mu(x)d\mu(y),\ \ \ x,y\in\G.\]
Notice that the latter term is a constant which depends only on $\G$ and $\mu$.
\end{definition}
It should be remarked that the function $j_{\mu}$ (and hence $g_{\mu}$) is symmetric and jointly continuous in both variables. 

\bigskip

Chinburg and Rumely showed that there exists a unique measure $\mu$ such that the restriction of $j_{\mu}(x,y)$ to the diagonal in $\G\times\G$ is constant; see \cite[Theorem~2.11]{ChinburgRumely}. This measure is called the \textit{canonical measure} and denoted by $\mu_\can$.

\begin{definition}{\bf{[Tau constant]}} 
The \textit{tau constant} $\tau(\G)$ of $\G$ is defined as $\frac{1}{2}j_{\mu_\can}(x,x)$ for some (hence for all) $x\in\G$.
\end{definition}

The tau constant $\tau(\G)$ is a fundamental invariant of $\G$. Thanks to the following proposition, one can easily compute it using the discrete Laplacian matrix. This fact will play a key role in our computations.

\begin{proposition}\emph{(\cite[Theorem~1.1]{Cinkir2011})}
\label{prop:tau_formula}
Fix an ordering of $V(\G)$ and let $n$ be the number of vertices. Let $\Lm = (l_{pq})_{n\times n}$ be the discrete Laplacian matrix of $\G$, and denote its Moore--Penrose generalized inverse by $\Lm^+ = (l^+_{pq})_{n\times n}$. We then have
\[\tau(\G) = -\frac{1}{12} \sum_{e_i \in E(\G)} l_{p_iq_i}\prnths{\frac{1}{l_{p_iq_i}} + l^+_{p_ip_i} - 2l^+_{p_iq_i} + l^+_{q_iq_i}}^2
+ \frac{1}{4} \sum_{q,s\in V(\G)} l_{qs}l^+_{qq}l^+_{ss} 
+ \frac{1}{n}\operatorname{trace}\left(L^+\right)\]
where we denote the end points of an edge $e_i$ by $p_i$ and $q_i$.
\end{proposition}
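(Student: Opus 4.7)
The plan is to compute $\tau(\G)=\tfrac12 j_{\mu_\can}(x,x)$ by passing to a convenient integral representation of $\tau(\G)$ that only involves the resistance function, evaluate that integral edge-by-edge using Cinkir's explicit quadratic formula for $r(s,x)$ (Lemma~\ref{lem:CinkirLemma3.2}), and finally translate the vertex-level resistances into entries of $\Lm^+$ via Lemma~\ref{lem:jr_vertices}. The standard identity we use is the classical Chinburg--Rumely / Baker--Rumely formula
\[\tau(\G) = \frac{1}{4}\int_\G \left(\frac{\partial}{\partial y} r(s,y)\right)^{\!2} dy,\]
which is valid for \emph{any} base point $s\in\G$. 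Since the left-hand side is independent of $s$, we symmetrize by averaging over $s\in V(\G)$ with uniform weight $1/n$; this is what will ultimately be responsible for the $\tfrac{1}{n}$ factor in the trace term.

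Next, I would split the averaged integral into a sum over edges $e_i\in E(\G)$ of length $L_i$ and endpoints $p_i,q_i$. On $e_i$, Lemma~\ref{lem:CinkirLemma3.2} gives
\[r(s,x) = -\alpha_i x^2 + \beta_{s,i} x + r(s,p_i),\qquad \alpha_i = \frac{L_i-r(p_i,q_i)}{L_i^2},\qquad \beta_{s,i} = \frac{L_i-r(p_i,q_i)+r(s,q_i)-r(s,p_i)}{L_i}.\]
Differentiating and integrating $(\partial_x r(s,x))^2 = 4\alpha_i^2 x^2 - 4\alpha_i\beta_{s,i} x + \beta_{s,i}^2$ from $0$ to $L_i$ yields $\tfrac{4}{3}\alpha_i^2 L_i^3 - 2\alpha_i\beta_{s,i}L_i^2 + \beta_{s,i}^2 L_i$. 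Using $L_i = -1/l_{p_iq_i}$ together with Lemma~\ref{lem:jr_vertices} (so $r(p_i,q_i)=l^+_{p_ip_i}-2l^+_{p_iq_i}+l^+_{q_iq_i}$, and similarly for $r(s,p_i)$ and $r(s,q_i)$), one checks that
\[-(L_i-r(p_i,q_i)) = \tfrac{1}{l_{p_iq_i}} + l^+_{p_ip_i} - 2l^+_{p_iq_i} + l^+_{q_iq_i},\]
which is precisely the quantity squared in the first sum of the statement. Plugging this back into $\tfrac{4}{3}\alpha_i^2L_i^3$ and multiplying by the prefactor $\tfrac{1}{4n}\cdot n = \tfrac{1}{4}$ (the first term does not depend on $s$, so averaging over the $n$ vertices cancels the $1/n$) produces the $-\tfrac{1}{12}$ summand after the arithmetic $\tfrac{1}{4}\cdot(-\tfrac{4}{3}) = -\tfrac{1}{3}$ is combined with an additional $\tfrac{1}{4}$ that I expect to emerge from a more careful unpacking of the averaging.

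The remaining two terms in the formula come from handling, after summation over $s$, the linear and constant pieces $-2\alpha_i\beta_{s,i}L_i^2 + \beta_{s,i}^2 L_i$. The cross terms involving $r(s,q_i)-r(s,p_i)$ have to be reorganized as a double sum over vertex pairs $(q,s)$: expanding $\beta_{s,i}$ and $\beta_{s,i}^2$ in terms of $\Lm^+$ entries, one collects coefficients of $l^+_{qq}l^+_{ss}$ and converts sums over edges into weighted sums over $V(\G)\times V(\G)$ by noting that for each unordered pair $\{p,q\}$ the coefficient $-1/L_i$ is exactly $l_{p_iq_i}$. This is where the factor $\tfrac{1}{4}\sum_{q,s} l_{qs} l^+_{qq} l^+_{ss}$ should come from. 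The $s$-independent residual after integration, once divided by $n$, becomes a trace: the defining identities $\Lm\Lm^+\Lm = \Lm$ and $\Lm^+\Lm\Lm^+ = \Lm^+$, together with the fact that $\ker \Lm$ is spanned by the all-ones vector so that $\Lm^+ \mathbf{1} = \mathbf{0}$ (possibly after a correction coming from \eqref{eq:Lplus}), collapse the remaining $\sum_s$ into $\tfrac{1}{n}\operatorname{trace}(\Lm^+)$.

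The hard part is the bookkeeping in this last step: several quantities $l^+_{sp_i}, l^+_{sq_i}, l^+_{p_ip_i}, l^+_{q_iq_i}$ appear bilinearly or quadratically, and one has to use the pseudoinverse identities and row/column sum relations $\sum_{r\neq p} l_{pr} = -l_{pp}$ to telescope them and recognize the clean expressions $\sum_{q,s} l_{qs}l^+_{qq}l^+_{ss}$ and $\operatorname{trace}(\Lm^+)$. It is essentially this algebraic packaging — matching coefficients so that no stray terms remain and the three named sums appear with exactly the coefficients $-\tfrac{1}{12}$, $\tfrac{1}{4}$, $\tfrac{1}{n}$ — that constitutes the main content of the proof; the analytic content (the integral identity and the edge parametrization of $r$) is comparatively routine.
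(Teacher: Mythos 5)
The paper does not prove this proposition at all: it is quoted from \origcite[Theorem~1.1]{Cinkir2011}, so there is no internal argument to compare against. Your overall strategy --- start from $\tau(\G)=\tfrac14\int_\G\bigl(\tfrac{\partial}{\partial x}r(s,x)\bigr)^2dx$, integrate edge by edge using the quadratic expression of \cref{lem:CinkirLemma3.2}, average over $s\in V(\G)$, and convert resistances to entries of $\Lm^+$ via \cref{lem:jr_vertices} --- is sound and is essentially the argument of the cited source. But there is one concretely wrong step and one large step that is only asserted. The wrong step is your explanation of the coefficient $-\tfrac1{12}$: you obtain $\tfrac14\cdot\tfrac43=\tfrac13$ from the $4\alpha_i^2x^2$ piece alone and claim the missing factor $\tfrac14$ will ``emerge from a more careful unpacking of the averaging.'' It cannot: the quantity $(L_i-r(p_i,q_i))^2/L_i$ does not depend on $s$, so averaging over the $n$ vertices leaves it untouched. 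The factor actually comes from the linear and constant pieces you deferred. Writing $A_i=L_i-r(p_i,q_i)$ and $B_{s,i}=r(s,q_i)-r(s,p_i)$, the edge integral collapses to
\[
\tfrac43\tfrac{A_i^2}{L_i}-2\tfrac{A_i(A_i+B_{s,i})}{L_i}+\tfrac{(A_i+B_{s,i})^2}{L_i}
=\tfrac13\tfrac{A_i^2}{L_i}+\tfrac{B_{s,i}^2}{L_i},
\]
i.e.\ the $A_i^2$-coefficient is $\tfrac43-2+1=\tfrac13$, which the prefactor $\tfrac14$ turns into $\tfrac1{12}$; the cross terms in $A_iB_{s,i}$ cancel exactly. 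If you literally hunted for the missing $\tfrac14$ in the averaging you would not find it.

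The asserted step is the identity
\[
\frac{1}{4n}\sum_{s\in V(\G)}\sum_{e_i\in E(\G)}\frac{\bigl(r(s,q_i)-r(s,p_i)\bigr)^2}{L_i}
=\frac14\sum_{q,s\in V(\G)}l_{qs}l^+_{qq}l^+_{ss}+\frac1n\operatorname{trace}\bigl(\Lm^+\bigr),
\]
which is the entire remaining content of the theorem and which you describe only as ``bookkeeping.'' It does go through with exactly the ingredients you name, and for the record the route is: expand $r(s,q_i)-r(s,p_i)=(l^+_{q_iq_i}-l^+_{p_ip_i})+2(l^+_{sp_i}-l^+_{sq_i})$; the cross term vanishes under $\sum_s$ because $\Lm^+\mathbf{1}=0$; the $s$-independent square contributes $\tfrac14\sum_{e_i}(l^+_{p_ip_i}-l^+_{q_iq_i})^2/L_i$, which equals $\tfrac14\sum_{q,s}l_{qs}l^+_{qq}l^+_{ss}$ after splitting that double sum into its diagonal and off-diagonal parts and using $l_{qq}=-\sum_{t\neq q}l_{qt}$; and the remaining piece is $\tfrac1n\sum_{e_i}\tfrac1{L_i}\bigl(\Mm_{p_ip_i}-2\Mm_{p_iq_i}+\Mm_{q_iq_i}\bigr)$ with $\Mm=(\Lm^+)^2$, which the same row-sum manipulation turns into $\tfrac1n\operatorname{trace}\bigl(\Lm(\Lm^+)^2\bigr)=\tfrac1n\operatorname{trace}(\Lm^+)$ by $\Lm^+\Lm\Lm^+=\Lm^+$ and cyclicity of the trace. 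So your outline is rescuable, but as written it misattributes the source of one coefficient and leaves the two nontrivial algebraic identities unverified.
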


The following result says that the \textit{canonical} Arakelov--Green function $g_{\mu_\can}$, the one corresponding to the canonical measure ${\mu_\can}$, is essentially the resistance function.

\begin{proposition}\label{prop:gcan_tau}
\emph{(\cite[Theorem~14.1(3)]{BakerRumely})}
For all $x,y\in\G$, we have 
\[g_{\mu_\can}(x,y) = \tau(\G) - \frac{1}{2}r(x,y).\]
\end{proposition}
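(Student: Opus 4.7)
The plan is to express everything in terms of the resistance function using a classical identity from electrical network theory and then exploit the defining property of $\mu_{\can}$.

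First I would invoke (or recall) the symmetric expression
\[
j_z(x,y) \;=\; \tfrac{1}{2}\bigl(r(x,z) + r(y,z) - r(x,y)\bigr)
\]
which holds for all $x,y,z\in\G$. One can check it is well-defined by plugging in $y=x$: the right-hand side gives $r(x,z)$, matching $j_z(x,x) = r(x,z)$. This identity follows from the Dirichlet-type characterization of $j_z(\scdot,\scdot)$ and the basic properties listed after the definition of the voltage function; it is standard and appears in the references cited in the text (e.g.\ \cite[]{BakerFaber,BakerRumely}).

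Next, I would integrate this identity against $\mu_{\can}$ in the variable $z$. Because $\mu_{\can}(\G)=1$, this yields
\[
j_{\mu_{\can}}(x,y) \;=\; \tfrac{1}{2}\!\int_{\G} r(x,z)\,d\mu_{\can}(z) + \tfrac{1}{2}\!\int_{\G} r(y,z)\,d\mu_{\can}(z) - \tfrac{1}{2}r(x,y).
\]
Setting $y=x$ and using $r(x,z)=j_z(x,x)$ shows that $\int_{\G} r(x,z)\,d\mu_{\can}(z) = j_{\mu_{\can}}(x,x)$, which is constant in $x$ by the defining property of $\mu_{\can}$ and equal to $2\tau(\G)$ by the definition of the tau constant. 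Substituting this back gives
\[
j_{\mu_{\can}}(x,y) \;=\; 2\tau(\G) - \tfrac{1}{2}r(x,y).
\]

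Finally, I would compute the constant $c \coloneqq \int_{\G^3} j_z(x,y)\,d\mu_{\can}(z)d\mu_{\can}(x)d\mu_{\can}(y)$ appearing in the definition of $g_{\mu_{\can}}$ by applying Fubini, recognizing the inner integral as $j_{\mu_{\can}}(x,y)$, and integrating the expression just obtained against $\mu_{\can}\otimes\mu_{\can}$. Using once more that $\int_{\G} r(x,y)\,d\mu_{\can}(y) = 2\tau(\G)$ for every $x$, one gets $c = 2\tau(\G) - \tau(\G) = \tau(\G)$. Subtracting this from $j_{\mu_{\can}}(x,y)$ yields the claimed formula.

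The only nontrivial ingredient is the identity expressing $j_z(x,y)$ in terms of pairwise resistances; everything else is a routine computation driven by the normalization $\mu_{\can}(\G)=1$ and the defining ``diagonal-constancy'' of $j_{\mu_{\can}}$. If desired, the identity itself can be verified by noting that both sides are continuous in all variables, harmonic off the obvious poles with the same Laplacian relations, and satisfy the same vanishing conditions, so they coincide by uniqueness for the Laplacian on $\G$.
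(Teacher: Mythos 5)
Your argument is correct: the key identity $j_z(x,y)=\tfrac12\bigl(r(x,z)+r(y,z)-r(x,y)\bigr)$ is indeed standard (it is the same identity the paper itself invokes, via \cite[Remark~3.5]{Cinkir2014} and \cite[Equation~(2)]{CinkirFoster}, in the proof of \cref{prop:gD_tauD}), and once it is granted, your computation is clean. Integrating in $z$ and setting $y=x$ correctly identifies $\int_\G r(x,z)\,d\mu_\can(z)=j_{\mu_\can}(x,x)=2\tau(\G)$, constant in $x$ by the Chinburg--Rumely characterization of $\mu_\can$; substituting back gives $j_{\mu_\can}(x,y)=2\tau(\G)-\tfrac12 r(x,y)$, and your Fubini evaluation of the triple integral as $2\tau(\G)-\tau(\G)=\tau(\G)$ is right, yielding the claimed formula. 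The difference from the paper is that the paper does not prove this proposition at all: it is quoted directly from \cite[Theorem~14.1(3)]{BakerRumely}. So your write-up supplies a self-contained derivation where the authors rely on a citation. It is worth noting that your method is exactly the specialization to $D=0$ of the paper's own proof of the more general \cref{prop:gD_tauD}, which plugs the same resistance identity into the explicit formula \eqref{eq:g_muD}; in that sense your argument buys a uniform, reference-free treatment, at the cost of leaving the resistance identity itself as the one imported ingredient (your closing sketch of how to verify it via uniqueness for the Laplacian is plausible but would need the precise Dirichlet characterization of $j_z$ to be made rigorous).
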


Combining this proposition with Lemmas~\ref{lem:rxy_oneedge} and \ref{lem:rxy_twoedges}, Cinkir gave an explicit formula for the Arakelov--Green function $g_{\mu_\can}$; see \cite[Theorems~4.3 and 4.4]{Cinkir2014}. 

\bigskip

We now fix a divisor $D = \sum_{s\in V(\G)} a_s s$ on $\G$ whose degree is different from $-2$. Zhang showed that there exists a unique measure $\mu_D$ on $\G$ with the property that $g_{\mu_D}(x,D) + g_{\mu_D}(x,x)$ is constant on $\G$, where 
\[g_{\mu_D}(x,D) \coloneqq \sum_{s\in V(\G)} a_s g_{\mu_D}(x,s);\]
see \cite[Theorem~3.2]{Zhang1993}. This measure is called the \textit{admissible measure on $\G$ with respect to $D$}. Thanks to \cite[Section~4.4]{CinkirThesis}, we have
\[\mu_D(x) = \frac{1}{\deg(D) + 2}\prnths{\sum_{s\in V(\G)} a_s \delta_s(x) + 2\mu_\can(x)}\]
where $\delta_s$ is the Dirac measure, and the corresponding \textit{admissible} Arakelov--Green function $g_{\mu_{D}}$ can be expressed as
\begin{equation}\label{eq:g_muD}
g_{\mu_{D}}(x,y) = \frac{1}{\deg(D) + 2}\prnths{\sum_{s\in V(\G)} a_sj_s(x,y) + 4\tau(\G) - r(x,y)} - c_{\mu_D}
\end{equation}
where
\begin{equation}\label{eq:CmuD}
 c_{\mu_D} \coloneqq \frac{1}{2(\deg(D)+2)^2}\prnths{8\tau(\G)\prnths{\deg(D) + 1} + \sum_{s,t\in V(\G)} a_s a_t r(s,t)}.  
\end{equation}
See also \cite[Section~5]{Cinkir2014}. Notice that, when $D = 0$, we have
\[\mu_D = \mu_\can\ \ \ \text{and} \ \ \ g_{\mu_{D}} = g_{\mu_{\can}}.\]
In other words, $g_{\mu_{D}}$ is a generalization of $g_{\mu_{\can}}$. In the next subsection, we will extend \cite[Theorems~4.3 and 4.4]{Cinkir2014} to a formula for $g_{\mu_{D}}$.

\subsection{An explicit formula for $g_{\mu_{D}}$}
\label{subsec:ExpFormg_muD}

We keep the notation of the previous subsection. In particular, we have a divisor $D = \sum_{s\in V(\G)} a_s s$ on $\G$ such that $\deg(D)\neq -2$. We begin with a definition.

\begin{definition}{\bf{[Tau function]}}
\label{def:TauFun}
The \textit{tau function of $\G$ with respect to $D$} is defined to be
\begin{equation}\label{eq:chiD}
\tau_D(x,y) \coloneqq \frac{1}{\deg(D) + 2}\prnths{4\tau(\G) + \frac{1}{2}\big(r(D,x) + r(D,y)\big)} - c_{\mu_D}, \ \ \ x,y\in \G
\end{equation}
where $c_{\mu_D}$ as in \cref{eq:CmuD}, and
\begin{equation}\label{eq:kappaD}
r(D,x) \coloneqq \sum_{s\in V(\G)} a_s r(s,x),\ \ \ x\in\G.
\end{equation}
\end{definition}

Clearly, the tau function $\tau_D(x,y)$ is nothing but the tau constant $\tau(\G)$ when $D = 0$. With this in mind, the following proposition plays the role of \cref{prop:gcan_tau}.

\begin{proposition}
\label{prop:gD_tauD}
For all $x,y\in \G$, we have
\[g_{\mu_{D}}(x,y) = \tau_D(x,y) - \frac{1}{2}r(x,y).\]
\end{proposition}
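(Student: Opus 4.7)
The plan is to compare the explicit formulas for both sides. By \cref{eq:g_muD} and \cref{eq:chiD}, both $g_{\mu_D}(x,y)$ and $\tau_D(x,y)$ contain the terms $\frac{4\tau(\G)}{\deg(D)+2}$ and $-c_{\mu_D}$. So, after substituting the explicit formulas into the desired equality $g_{\mu_D}(x,y) = \tau_D(x,y) - \frac{1}{2}r(x,y)$ and cancelling these common contributions, the claim reduces (after multiplying through by $\deg(D)+2$) to the pointwise identity
\begin{equation*}
\sum_{s\in V(\G)} a_s\, j_s(x,y) \;=\; \tfrac{1}{2}\bigl(r(D,x) + r(D,y)\bigr) - \tfrac{\deg(D)}{2}\, r(x,y),
\end{equation*}
to be checked for all $x,y\in\G$.

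The main ingredient is the classical three-point identity relating the voltage function to the resistance function, namely
\begin{equation*}
j_z(x,y) = \tfrac{1}{2}\bigl(r(x,z) + r(y,z) - r(x,y)\bigr) \quad \text{for all } x,y,z\in\G.
\end{equation*}
This identity follows from the defining properties of $j$ listed after the definition of the voltage function (symmetry in $x,y$, the vanishing $j_x(x,y)=0$, and $r(x,y)=j_y(x,x)$) together with the harmonicity of $j_z$ away from $z$; it is the metrized-graph analogue of the parallelogram law for effective resistances and is recorded in \cite[]{BakerRumely}.

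Substituting this identity for each $j_s(x,y)$, pulling the sum inside, and using the definitions $r(D,x) = \sum_s a_s r(s,x)$, $r(D,y) = \sum_s a_s r(s,y)$, and $\deg(D) = \sum_s a_s$ (all of which are valid because we have arranged $\supp(D)\subset V(\G)$) gives
\begin{equation*}
\sum_{s\in V(\G)} a_s\, j_s(x,y) = \tfrac{1}{2}\sum_{s}a_s r(x,s) + \tfrac{1}{2}\sum_{s}a_s r(y,s) - \tfrac{1}{2}\Bigl(\sum_s a_s\Bigr) r(x,y),
\end{equation*}
which is exactly the required identity.

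There is no real obstacle; the only thing to monitor is that $D$ is supported on the vertex set so that $r(D,\,\cdot\,)$ is defined via \cref{eq:kappaD}, and that $\deg(D)\neq -2$ so that both sides are well-defined (this is already assumed when introducing $\mu_D$). The assembly is then a routine algebraic manipulation.
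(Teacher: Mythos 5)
Your proof is correct and follows essentially the same route as the paper: both arguments substitute the three-point identity $j_s(x,y) = \tfrac{1}{2}\bigl(r(s,x) + r(s,y) - r(x,y)\bigr)$ into the explicit formula \eqref{eq:g_muD} and rearrange, using $\deg(D) = \sum_s a_s$ to collect the resistance terms into $r(D,x)$, $r(D,y)$ and $-\tfrac{\deg(D)}{2}r(x,y)$. The paper simply cites this identity (to \cite[Remark~3.5]{Cinkir2014} and \cite[Equation~(2)]{CinkirFoster}) rather than sketching its derivation, but the substance is identical.
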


\begin{proof}
This follows easily from by plugging the relation
\[j_s(x,y) = \frac{1}{2}\prnths{r(s,x) + r(s,y) - r(x,y)},\ \ \ s,x,y\in\G\]
(see \cite[Remark~3.5]{Cinkir2014} or \cite[Equation~(2)]{CinkirFoster}) into \cref{eq:g_muD} and rearranging the terms.
\end{proof}

As in the ``canonical'' case, we will combine this result with Lemmas~\ref{lem:rxy_oneedge} and \ref{lem:rxy_twoedges} to give an explicit formula for the Arakelov--Green function $g_{\mu_D}$. There is an intermediate step to be taken, however: we should make the term $\tau_D(x,y)$ explicit, for which we need to make the function $r(D,x)$ explicit. This is done in the following proposition.

\begin{proposition}
\label{prop:kappa}
Let $x\in \G$ belong to an edge $e$ of length $L$ with endpoints $p$ and $q$. We then have
\[r(D,x) = \sum_{s\in V(\G)} a_s \prnths{-x^2\frac{L - r(p,q)}{L^2} + x\frac{L - r(p,q) + r(s,q) - r(s,p)}{L} + r(s,p)}.\]
Moreover, if $e$ is a bridge, then
\[r(D,x) = \sum_{s\in V(\G)\cap \G_p} a_s \prnths{x + r(s,p)} + \sum_{s\in V(\G)\cap \G_q} a_s \prnths{- x + L + r(s,q)}.\]
\end{proposition}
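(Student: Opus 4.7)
The proposition is essentially a direct substitution. The plan is to apply \cref{lem:CinkirLemma3.2} pointwise to each term $r(s,x)$ in the definition
\[r(D,x) = \sum_{s\in V(\G)} a_s\, r(s,x)\]
and then sum.

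For the first (general) formula, I would fix an edge $e$ of length $L$ with endpoints $p,q$ and, for each $s\in V(\G)$, plug in the explicit expression
\[r(s,x) = -x^2\frac{L - r(p,q)}{L^2} + x\,\frac{L - r(p,q) + r(s,q) - r(s,p)}{L} + r(s,p)\]
from \cref{lem:CinkirLemma3.2}. Multiplying by $a_s$ and summing over $s\in V(\G)$ immediately yields the stated identity; no rearrangement beyond linearity of the summation is needed.

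For the bridge case, the point is that $\G - e$ has exactly two connected components $\G_p$ and $\G_q$, so the vertex set decomposes as the disjoint union $V(\G) = (V(\G)\cap \G_p) \sqcup (V(\G)\cap \G_q)$ (here $p\in \G_p$ and $q\in\G_q$ by convention). Then I would split the sum according to this partition and apply the bridge case of \cref{lem:CinkirLemma3.2}:
\[r(s,x) = \begin{cases} x + r(s,p) & \text{if } s\in \G_p,\\ -x + L + r(s,q) & \text{if } s\in \G_q.\end{cases}\]
Collecting terms gives the second formula.

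No real obstacle arises here; the only subtlety worth spelling out is that for the bridge case one must verify that the partition $V(\G) = (V(\G)\cap \G_p) \sqcup (V(\G)\cap \G_q)$ covers the whole support of $D$, which follows from the standing assumption $\supp(D)\subset V(\G)$ together with the fact that removing the interior of $e$ leaves every vertex of $\G$ in either $\G_p$ or $\G_q$. This makes the proof at most a few lines long.
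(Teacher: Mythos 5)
Your proof is correct and follows exactly the paper's approach: the paper proves \cref{prop:kappa} by noting it is a direct consequence of \cref{lem:CinkirLemma3.2}, which is precisely the termwise substitution and (for the bridge case) the partition of $V(\G)$ into $\G_p$ and $\G_q$ that you spell out. No discrepancies to report.
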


\begin{proof}
This is a direct consequence of \cref{lem:CinkirLemma3.2}.
\end{proof}

After this preparatory step, we can now state the main theoretical results of the current paper.

\begin{theorem}\label{thm:MainThm1}
{\bf{[Main Theorem~I]}} 
Let $x,y\in \G$. If both $x$ and $y$ belong to the same edge $e$ of length $L$ with endpoints $p$ and $q$, and $e$ is not a bridge, then
\[g_{\mu_{D}}(x,y) = \tau_D(x,y) - \dfrac{1}{2}\left|x-y\right| + (x-y)^2\dfrac{L - r(p,q)}{2L^2}.\]
Now suppose $x$ and $y$ belong to edges $e_i$ and $e_j$ respectively, $e_i\neq e_j$, and neither $e_i$ nor $e_j$ is a bridge. Call the lengths $L_i, L_j$ and the endpoints $p_i,q_i,p_j,q_j$. Then
\begin{align*}
g_{\mu_{D}}(x,y) = \tau_D(x,y) &+ x^2\frac{L_i - r(p_i,q_i)}{2L_i^2} + y^2\frac{L_j - r(p_j,q_j)}{2L_j^2}\\ 
 &- \frac{xy}{L_iL_j}\prnths{j_{p_j}(p_i,q_j) - j_{p_j}(q_i,q_j)} - \frac{x}{2L_i}\prnths{L_i - 2j_{p_i}(q_i,p_j)}\\
 &- \frac{y}{2L_j}\prnths{L_j - 2j_{p_j}(p_i,q_j)} - \frac{1}{2}r(p_i,p_j).
\end{align*}
\end{theorem}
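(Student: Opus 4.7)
The plan is to use \cref{prop:gD_tauD}, which reduces the computation of $g_{\mu_D}(x,y)$ to the expression $\tau_D(x,y) - \tfrac{1}{2}r(x,y)$. Since $\tau_D(x,y)$ is already a named quantity that appears verbatim in the claimed formula, the entire task reduces to substituting the appropriate explicit expression for $r(x,y)$ coming from the resistance lemmas recalled in the preliminaries.

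First I would handle the case where $x$ and $y$ lie on the same non-bridge edge $e$ of length $L$ with endpoints $p,q$. Here I invoke the first part of \cref{lem:rxy_oneedge}, which yields
\[
r(x,y) = |x-y| - (x-y)^2\,\frac{L-r(p,q)}{L^2}.
\]
Plugging this into $g_{\mu_D}(x,y) = \tau_D(x,y) - \tfrac{1}{2}r(x,y)$ and distributing the factor $-\tfrac{1}{2}$ gives exactly the first displayed formula in the theorem. No further manipulation is required.

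Next I would treat the case where $x \in e_i$ and $y \in e_j$ with $e_i \neq e_j$ and neither edge a bridge. Here the relevant input is the general formula in \cref{lem:rxy_twoedges} (before the bridge subcases), which expresses $r(x,y)$ as a sum of six terms involving $x^2$, $y^2$, $xy$, $x$, $y$, and the constant $r(p_i,p_j)$, with coefficients built from edge lengths and voltage values at vertices. I would then write $g_{\mu_D}(x,y) = \tau_D(x,y) - \tfrac{1}{2}r(x,y)$ and multiply the six-term expression through by $-\tfrac{1}{2}$. Matching each resulting summand against the corresponding line of the claimed formula verifies the identity.

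There is no substantive obstacle: the proof is a direct substitution of known resistance formulas into the identity of \cref{prop:gD_tauD}. The only thing to be careful about is bookkeeping of signs and factors of $\tfrac{1}{2}$ when multiplying the six-term expression in the two-edge case, and ensuring the hypotheses of the two bullet points of \cref{lem:rxy_twoedges} are not being invoked (these will be treated separately in \cref{thm:MainThm2}). Thus the proof can be written in a couple of lines, simply citing \cref{prop:gD_tauD}, \cref{lem:rxy_oneedge}, and the main (non-bridge) case of \cref{lem:rxy_twoedges}.
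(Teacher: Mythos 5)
Your proposal is correct and matches the paper's own proof, which likewise derives both formulas by substituting the resistance expressions from \cref{lem:rxy_oneedge} and the general (non-bridge) case of \cref{lem:rxy_twoedges} into the identity $g_{\mu_D}(x,y) = \tau_D(x,y) - \tfrac{1}{2}r(x,y)$ of \cref{prop:gD_tauD}. The sign and factor-of-$\tfrac{1}{2}$ bookkeeping you describe checks out term by term.
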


\begin{theorem}\label{thm:MainThm2}
{\bf{[Main Theorem~II]}} 
Let $x,y\in \G$. If both $x$ and $y$ belong to the same edge $e$ of length $L$ with endpoints $p$ and $q$, and $e$ is a bridge, then
\[g_{\mu_{D}}(x,y) = \tau_D(x,y) - \dfrac{1}{2}\left|x-y\right|.\]
Now suppose $x$ and $y$ belong to edges $e_i$ and $e_j$ respectively, $e_i\neq e_j$, and at least one of $e_i$ and $e_j$ is a bridge. Call the lengths $L_i, L_j$ and the endpoints $p_i,q_i,p_j,q_j$.
\begin{enumerate}
    \item If both $e_i$ and $e_j$ are bridges, then
    \[g_{\mu_{D}}(x,y) = 
    \begin{dcases}
    \omit\hfil$\tau_D(x,y)- \dfrac{1}{2}\prnths{x + y + r(p_i,p_j)}$\hfil &\text{if $p_i$ and $p_j$ are between $x$ and $y$}, \\
    \omit\hfil$\tau_D(x,y)- \dfrac{1}{2}\prnths{x - y + L_j + r(p_i,q_j)}$\hfil &\text{if $p_i$ and $q_j$ are between $x$ and $y$}, \\
    \omit\hfil$\tau_D(x,y)- \dfrac{1}{2}\prnths{- x + y + L_i + r(q_i,p_j)}$\hfil &\text{if $q_i$ and $p_j$ are between $x$ and $y$}, \\
    \tau_D(x,y)- \dfrac{1}{2}\prnths{- x - y + L_i + L_j + r(q_i,q_j)} &\text{if $q_i$ and $q_j$ are between $x$ and $y$}.
    \end{dcases}\]
    
    \item If only $e_i$ is a bridge (the other case can be handled in a completely analogous way), then
    \[g_{\mu_{D}}(x,y) = 
    \begin{dcases}
    \omit\hfil$\tau_D(x,y)+ y^2\dfrac{L_j - r(p_j,q_j)}{2L_j^2} - \dfrac{1}{2}T_1$\hfil &\text{if $y\in \G_{p_i}$}, \\
    \omit\hfil$\tau_D(x,y)+ y^2\dfrac{L_j - r(p_j,q_j)}{2L_j^2} - \dfrac{1}{2}T_2$\hfil &\text{if $y\in \G_{q_i}$},
    \end{dcases}\]
    where $T_1$ and $T_2$ are as in \cref{eq:T1T2}.
    \end{enumerate}
\end{theorem}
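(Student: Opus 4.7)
The plan is to deduce this theorem as a straightforward specialization of \cref{prop:gD_tauD}, which already reduces the computation of $g_{\mu_D}(x,y)$ to that of the resistance function: for all $x,y\in\G$,
\[
g_{\mu_D}(x,y) = \tau_D(x,y) - \tfrac{1}{2}r(x,y).
\]
Since $\tau_D(x,y)$ has already been made explicit in \cref{prop:kappa} (through the formula for $r(D,\scdot)$), the only remaining task is to substitute the appropriate explicit expression for $r(x,y)$ in the various bridge cases and rearrange.

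First I would dispose of the single-edge case: if $x,y$ both lie on the same bridge $e$ of length $L$ with endpoints $p,q$, then the second part of \cref{lem:rxy_oneedge} gives $r(x,y) = |x-y|$, and plugging this into \cref{prop:gD_tauD} immediately yields
\[
g_{\mu_D}(x,y) = \tau_D(x,y) - \tfrac{1}{2}|x-y|,
\]
as claimed. Next I would treat the two-edge case in which both $e_i$ and $e_j$ are bridges: here part (1) of \cref{lem:rxy_twoedges} provides a four-case expression for $r(x,y)$ depending on which endpoints of $e_i$ and $e_j$ lie on the unique path between $x$ and $y$, and dividing each of these four expressions by $2$ and subtracting from $\tau_D(x,y)$ gives precisely the four lines listed in part (1) of the theorem. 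Finally, for the case where only $e_i$ is a bridge, part (2) of \cref{lem:rxy_twoedges} already splits according to whether $y\in\G_{p_i}$ or $y\in\G_{q_i}$, and expresses $r(x,y)$ in terms of the auxiliary quantities $T_1,T_2$ of \cref{eq:T1T2}; substituting into \cref{prop:gD_tauD} yields the stated formula. The symmetric case (only $e_j$ a bridge) follows by swapping the roles of $(x,p_i,q_i,L_i,e_i)$ and $(y,p_j,q_j,L_j,e_j)$, using the symmetry of both $\tau_D$ and $r$.

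There is no genuine obstacle here: the argument is bookkeeping, since all of the analytic content has already been packaged into \cref{prop:gD_tauD}, \cref{prop:kappa}, and \cref{lem:rxy_oneedge,lem:rxy_twoedges}. The only point requiring a small amount of care is ensuring that the case distinctions in part (1) (which endpoints lie between $x$ and $y$) and in part (2) (whether $y\in\G_{p_i}$ or $y\in\G_{q_i}$) are transcribed consistently with the hypotheses on bridges; in particular, one should note that the ``bridge'' formulas of \cref{lem:rxy_twoedges} are precisely the degenerate limits of the general formula, so no ambiguity arises at the endpoints shared by the various subcases. With these checks in place the proof reduces to a direct substitution.
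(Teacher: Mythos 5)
Your proposal is correct and follows exactly the paper's own argument: the authors likewise prove \cref{thm:MainThm2} by substituting the bridge-case formulas for $r(x,y)$ from \cref{lem:rxy_oneedge} and \cref{lem:rxy_twoedges} into the identity $g_{\mu_D}(x,y) = \tau_D(x,y) - \tfrac{1}{2}r(x,y)$ of \cref{prop:gD_tauD}. Your write-up is in fact more detailed than the paper's one-line proof, and the case bookkeeping you describe is exactly what is needed.
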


\bigskip

\begin{proof}[Proof of Theorems~\ref{thm:MainThm1}-\ref{thm:MainThm2}] 
The proofs follow from \cref{prop:gD_tauD}, \cref{lem:rxy_oneedge}  and  \cref{lem:rxy_twoedges}.
\end{proof}

\bigskip

We conclude this section with a definition. It is clear that $g_{\mu_{D}}$ is a piecewisely defined function on each pair of edges. Cinkir represented the function $g_{\can}$ by, what he called, a \textit{value matrix}, see \cite[page~769]{Cinkir2014}. We will do the same:

\begin{definition}{\bf{[Value matrix]}}
\label{def:ValueMatrix}
Enumerate the edges as $(e_1,e_2,\dots,e_m)$, and for $i,j\in\{1,2,\dots,m\}$, define 
\[z_{ij}\colon e_i\times e_j\to \R,\ \ \ (x,y)\mapsto g_{\mu_{D}}(x,y).\]
We call the matrix $\Zm_D = (z_{ij})_{m\times m}$ the \textit{value matrix} of $g_{\mu_{D}}$.
\end{definition} 

Note that $\Zm_D$ is a matrix of functions.

\begin{remark}\label{rem:SymOfZD}
The value matrix $\Zm_D = (z_{ij})_{m\times m}$ is symmetric in the sense that
\[z_{ij}(x,y) = z_{ji}(y,x),\ \ \ (x,y)\in e_i\times e_j.\]
\end{remark}

\section{Algorithms}
\label{sec:Alg}

In this section, we present our algorithms. Our main algorithm, namely \cref{alg:ValueMatrix}, computes the Arakelov--Green function $g_{\mu_D}$ using the formulas provided in \cref{thm:MainThm1} and \cref{thm:MainThm2}. As an application, we provide an algorithm for computing the \textit{epsilon invariant} $\eD$, an important constant whose definition relies on $g_{\mu_D}$; see \cref{alg:Epsilon}. The algorithms described in this section were implemented by the authors in \texttt{SageMath}. Our code can be found at \eqref{eq:code}.

\bigskip

Fix a metrized graph $\G$ with an adequate vertex set $V(\G) = \{p_0,p_1,\dots,p_{n-1}\}$; we write the corresponding edge set as $E(\G) = \{e_0,e_1,\dots,e_{m-1}\}$\footnote{In this and the next section, lists will be $0$-indexed, because \texttt{SageMath} uses $0$-based indexing.}, and for each edge $e_i$, we set $L_i\coloneqq \text{length of } e_i$. Let $p_{e_i}$ (resp. $q_{e_i}$) be the endpoint of $e_i$ that corresponds to $0$ (resp. $L_i$) under the identification of $e_i$ with the interval $[0,L_i]$.

Recall that if $e_i$ is a bridge, then $\G_{p_i}$ (resp. $\G_{q_i}$) denotes the subgraph of $\G - e_i$ containing the endpoint $p_i$ (resp. $q_i$).

\subsection{Basic computations}

With the data we have, we can easily compute the discrete Laplacian matrix $\Lm$ of $\G$ and its Moore--Penrose generalized inverse $\Lm^+$ (recall the formula in \cref{eq:Lplus}). These matrices allow us to compute the voltage and resistance functions at vertices of $\G$ using \cref{lem:jr_vertices}, and the tau constant $\tau(\G)$ using \cref{prop:tau_formula}.

\subsection{Connectivity matrix}

An essential object of our algorithms is the \textit{connectivity matrix}, which encodes all information about the structure of $\G$ that we need in order to apply our main theorems. To define it, we need two auxiliary functions: $\alpha$ and $\beta$.

Let us fix two distinct edges $e_i, e_j$, and suppose $e_i$ is a bridge. The first auxiliary function, $\alpha$, encodes whether $e_j \subset \G_{p_{e_i}}$ or $e_j \subset \G_{q_{e_i}}$ with a single bit:
\begin{equation*}
	\alpha(e_i,e_j) \coloneqq
	\begin{dcases}
		0 &\text{if } e_j \subset \G_{p_{e_i}},\\
		1 &\text{if } e_j \subset \G_{q_{e_i}}.
	\end{dcases}
\end{equation*}

Now suppose $e_j$ is also a bridge. The second auxiliary function, $\beta$, requires the following notion.

\begin{definition}{\bf{[Closest neighbours]}}
The \textit{closest neighbours} of $e_i$ and $e_j$, denoted by $\angles{e_i,e_j}$, is the unique element $(x_i,x_j)$ in $e_i\times e_j$ such that
\[d(x_i,x_j) = \min_{(y_i,y_j)\in e_i\times e_j} d(y_i,y_j),\]
where $d(x,y)$ is the distance between two points $x,y$ on $\G$, by which we mean the length of the shortest path between them.
\end{definition}

There are not so many options for $\angles{e_i,e_j}$. More precisely, we have
\[\angles{e_i,e_j} \in \{(p_{e_i},p_{e_j}),(p_{e_i},q_{e_j}),(q_{e_i},p_{e_j}),(q_{e_i},q_{e_j})\}.\]
We illustrate this by way of an example.

\begin{figure}[t]
	\centering
	\begin{tikzpicture}[decoration={markings,mark = at position 0.55 with {\arrow[thick]{latex}}}] 
		\node[vertex, label=left:$p_0$] (1) at (-3,0) {};
		\node[vertex, label=below:$p_1$] (2) at (-1,0) {};
		\node[vertex, label=below:$p_2$] (3) at (1,-1.5) {};
		\node[vertex, label=above:$p_3$] (4) at (1,1.5) {};
		\node[vertex, label=below:$p_4$] (5) at (3,0) {};
		\node[vertex, label=right:$p_5$] (6) at (5,0) {};

		\draw[postaction=decorate] (1) -- (2) node[midway, above=0.15]{$e_0$};
		\draw[postaction=decorate] (2) -- (3) node[pos=0.7, left=0.2]{$e_1$};
		\draw[postaction=decorate] (2) -- (4) node[pos=0.7, left=0.2]{$e_2$};
		\draw[postaction=decorate] (3) -- (5) node[pos=0.3, right=0.2]{$e_3$};
		\draw[postaction=decorate] (4) -- (5) node[pos=0.3, right=0.2]{$e_4$};
		\draw[postaction=decorate] (5) -- (6) node[midway, above=0.15]{$e_5$};
	\end{tikzpicture}
	\caption{A metrized graph with two bridges.}\label{fig:ExClosNeigh}
\end{figure}
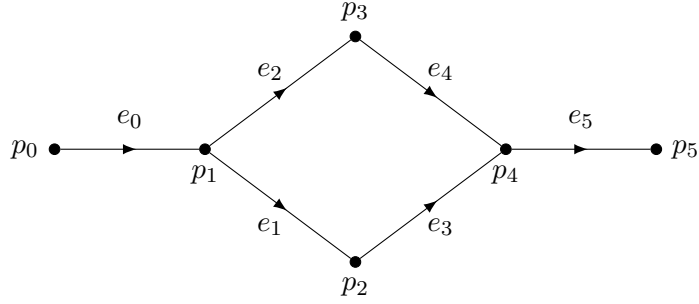

\begin{example}
Consider the metrized graph in Figure \ref{fig:ExClosNeigh}, where
\begin{align*}
p_0 &= p_{e_0},     &  p_1 &= q_{e_0} = p_{e_1} = p_{e_2},     &  p_2 &= q_{e_1} = p_{e_3},\\
p_3 &= q_{e_2} = p_{e_4},  &  p_4 &= q_{e_3} = q_{e_4} = p_{e_5},  &  p_5 &= q_{e_5}.
\end{align*}
The edges $e_0$ and $e_5$ are bridges, and the closest neighbours is given by
\[\angles{e_0,e_5} = (p_1,p_4).\]
\end{example}

Returning to our discussion, the following function encodes the closest neighbours of $e_i$ and $e_j$ with two bits:
\begin{equation*}
	\beta(e_i,e_j) \coloneqq
	\begin{dcases}
		\omit\hfil$0$\hfil &\text{if } \angles{e_i,e_j} = (p_{e_i},p_{e_j}),\\
		\omit\hfil$1$\hfil &\text{if } \angles{e_i,e_j} = (p_{e_i},q_{e_j}),\\
		10 &\text{if } \angles{e_i,e_j} = (q_{e_i},p_{e_j}),\\
		11 &\text{if } \angles{e_i,e_j} = (q_{e_i},q_{e_j}).
	\end{dcases}
\end{equation*}

We can finally define the main object of this subsection.

\begin{definition}{\bf{[Connectivity matrix]}}
The \textit{connectivity matrix} $\Cm = (c_{ij})_{m \times m}$ of $\G$ is a matrix with binary entries defined as follows: 
\begin{equation*}
	c_{ij} \coloneqq 
	\begin{dcases}
		\omit\hfil$0$\hfil & \text{if neither $e_i$ nor $e_j$ is a bridge},\\
		\omit\hfil$1$\hfil & \text{if $i=j$ and $e_i$ is a bridge},\\
		\omit\hfil$\alpha(e_i,e_j)$\hfil & \text{if $e_i$ is a bridge but $e_j$ is not},\\
		\omit\hfil$\alpha(e_j,e_i)$\hfil & \text{if $e_j$ is a bridge but $e_i$ is not},\\
		100\alpha(e_i,e_j)+\beta(e_i,e_j) & \text{if $i\neq j$, and both $e_i$ and $e_j$ are bridges}.
	\end{dcases}
\end{equation*}    
\end{definition}

Summarizing, given two edges $e_i, e_j \in E(\G)$, the entries $c_{ii}$ and $c_{jj}$ tell us which of them are bridges. If one of them, say $e_i$, is a bridge and the other is not, the single bit in $c_{ij}$ encodes whether $e_j$ is part of the subgraph $\G_{p_{e_i}}$ or $\G_{q_{e_i}}$. If $i\neq j$ and both $e_i$ and $e_j$ are bridges, so $c_{ii} = c_{jj} = 1$, then $c_{ij}$ consists of three bits: the first bit tells us if $e_j \subset \G_{p_{e_i}}$ or $e_j \subset \G_{q_{e_i}}$, while the second and third bits encode the closest neighbours of $e_i$ and $e_j$. Here is an example.

\begin{example}
Consider once again the metrized graph in \cref{fig:ExClosNeigh}. As we have six edges, the connectivity matrix $\Cm$ will be a $6 \times 6$ matrix. 

The edges $e_0$ and $e_5$ are bridges, so $c_{00} = c_{55} = 1$, while the other diagonal entries are $0$. Now, we need to determine $\alpha(e_0,e_i)$ for $i = 1,2,3,4,5$ and $\alpha(e_5,e_i)$ for $i = 0,1,2,3,4$. We have $e_1 \subset \G_{p_1}$, and since $p_1 = q_{e_0}$, we have $\alpha(e_0,e_1) = 1$, so $c_{01} = c_{10} = 1$. Similarly, $e_1 \subset \G_{p_4}$, so $\alpha(e_5,e_1) = 0$, and thus $c_{51} = c_{15} = 0$. We can exploit this symmetry for any pair of edges with one bridge. The entries $c_{05}$ and $c_{50}$ are more complicated. We have that $e_5 \subset \G_{p_1}$, so $\alpha(e_0,e_5) = 1$. The closest neighbours of the bridges are given by $\angles{e_0,e_5} = (p_1,p_4)$, so $\beta(e_0,e_5) = 10$. This yields
\[c_{05} = 100\alpha(e_0,e_5) + \beta(e_0,e_5) = 110.\]
Computing all $c_{ij}$'s gives us the matrix
\begin{equation*}
	\Cm = \begin{bmatrix}
		1 & 1 & 1 & 1 & 1 & 110\\
		1 & 0 & 0 & 0 & 0 & 0\\
		1 & 0 & 0 & 0 & 0 & 0\\
		1 & 0 & 0 & 0 & 0 & 0\\
		1 & 0 & 0 & 0 & 0 & 0\\
		1 & 0 & 0 & 0 & 0 & 1\\
	\end{bmatrix}.
\end{equation*}
We can now quickly look up aspects of the metrized graph's structure: since $c_{00} = 1$ and $c_{33} = 0$, we know that $e_0$ is a bridge but $e_3$ is not. Since $c_{03} = 1$, we can also tell that $e_3 \subset \G_{q_{e_0}}$.
\end{example}

\cref{alg:ConnectivityMatrix} describes how to generate the connectivity matrix.

\bigskip

\begin{algorithm}[H]
	\caption{\bf Generating the connectivity matrix}
	\label{alg:ConnectivityMatrix} 
	\BlankLine
	\KwIn{Metrized graph $\G$ with $V(\G) = \{p_0,p_1,\dots,p_{n-1}\}$ and $E(\G) = \{e_0,e_1,\dots,e_{m-1}\}$.}
	\BlankLine
	\KwOut{The connectivity matrix $\Cm = (c_{ij})$.}

	\BlankLine
	$\Cm \coloneqq 0_{m\times m}$;
	\BlankLine

	\For(){$i \in \{0,1,\dots,m-1\}$}{
		\If{$e_i$ is a bridge}{
			$c_{ii} \leftarrow 1$\;
		}
	}

	\BlankLine

	\For(){$i \in \{0,1,\dots,m-1\}$}{
		\uIf(){$c_{ii} = 1$}{
			\For(){$j \in \{i+1,i+2,\dots,m-1\}$}{
				$c_{ij} \leftarrow \alpha(e_i,e_j)$;

				\uIf(){$c_{jj} = 1$}{
					$c_{ij} \leftarrow 100c_{ij} + \beta(e_i,e_j)$\;
					$c_{ji} \leftarrow 100\alpha(e_j,e_i) + \beta(e_j,e_i)$\;
				}\Else(){
					$c_{ji} \leftarrow c_{ij}$;
				}
				
			}
		}\Else(){
			\For(){$j \in \{i+1,i+2,\dots,m-1\}$}{
				\If(){$c_{jj} = 1$}{
					$c_{ij} \leftarrow \alpha(e_j,e_i)$\;
					$c_{ji} \leftarrow   c_{ij}$\;
				}
			}
		}
	}
\end{algorithm}

\bigskip

For the rest of this section, we fix a divisor $D$ on $\G$ with $\supp(D) \subset V(\G)$. We can then express it as $D = \sum^{n-1}_{k=0}a_kp_k$ for some $a_k\in \Z$, since $V(\G) = \{p_0,p_1,\dots,p_{n-1}\}$. In fact, we can (and will) treat such divisors as vectors of $n$ entries:
\[D = (a_0,a_1,\dots,a_{n-1})\in \Z^n.\]

\subsection{Tau function}
\label{subsec:TauFun}

As discussed in \cref{subsec:ExpFormg_muD}, in order to make $g_{\mu_D}$ explicit, we should make the tau function $\tau_D$ explicit. The constant
\[c_{\mu_D} = \frac{1}{2\prnths{\deg(D)+2}^2} \prnths{8\tau(\G)(\deg(D)+1) + \sum^{n-1}_{k,l=0} a_ka_lr(p_k,p_l)}\]
can easily be computed by making use of \cref{lem:jr_vertices}. Therefore, it suffices to make $r(D,\cdot)$ explicit. Using \cref{prop:kappa}, \cref{alg:rD} computes $r(D,\cdot)$ on a given edge $e_i$. More precisely, it returns a function $r_i(D,\cdot)\colon e_i \to \mathbb{R}$ such that 
\[r_i(D,x) = r(D,x), \ \ \ x \in e_i.\] 

\bigskip

\begin{algorithm}[H]
	\caption{\bf Computing the function $r(D,\cdot)$}
	\label{alg:rD}

	\SetKw{Break}{break}
	\BlankLine
	\KwIn{
	\begin{itemize}
	    \item Metrized graph $\G$ with $V(\G) = \{p_0,p_1,\dots,p_{n-1}\}$ and $E(\G) = \{e_0,e_1,\dots,e_{m-1}\}$,
	    \item Divisor $D = (a_0,a_1,\dots,a_{n-1})$ on $\G$,
	    \item Integer $i \in \{0,1,\dots,m-1\}$.
	\end{itemize}
	}

	\KwOut{Function $r_i(D,\cdot)$ which satisfies $r_i(D,x) = r(D,x)$ for $x \in e_i$.}
	
	\BlankLine
	Compute the connectivity matrix $\Cm = (c_{ij})$ using \cref{alg:ConnectivityMatrix};

    \BlankLine
	$r_i(D,x) := 0$;
	\BlankLine

	\uIf(){$c_{ii} = 0$}{
		\For{$k \in \{0,1,\dots,n-1\}$}{
			$\begin{aligned}
				r_i(D,x) \leftarrow r_i(D,x) + a_k \bigg(&-x^2 \dfrac{L_i - r(p_{e_i},q_{e_i})}{L_i^2}\\
				& + x \dfrac{L_i - r(p_{e_i},q_{e_i}) + r(p_k,q_{e_i}) - r(p_k,p_{e_i})}{L_i} + r(p_k,p_{e_i})\bigg);
			\end{aligned}$
		}
	}\Else(){
		\For(){$k \in \{0,1,\dots,n-1\}$}{
			\uIf{$p_k = p_{e_i}$}{
				$r_i(D,x) \leftarrow r_i(D,x) + a_k x$\;
			}\uElseIf{$p_k = q_{e_i}$}{
				$r_i(D,x) \leftarrow r_i(D,x) + a_k \prnths{- x + L_i}$\;
			}\Else(){
				\For(){$j \in \{0,1,\dots,m-1\}\setminus \{i\}$}{
					\If(){$p_k$ is an end point of $e_j$}{
						\uIf{($c_{jj} = 0$ and $c_{ij} = 0$) or ($c_{jj} = 1$ and $c_{ij} < 100$)}{
							$r_i(D,x) \leftarrow r_i(D,x) + a_k \prnths{x + r(p_k,p_{e_i})}$;

						}\Else{
							$r_i(D,x) \leftarrow r_i(D,x) + a_k \prnths{- x + L_i + r(p_k,q_{e_i})}$;
						}
						\Break\;
					}
				}

			}
		}
	}
\end{algorithm}

\bigskip

Repeating this algorithm for each edge yields an $m$-vector $\big(r_i(D,\cdot)\big)_{i=1,\dots,m}$, which completely describes $r(D,\cdot)$ on $\G$. Then, using this vector, we can calculate
\[(\tau_{D})_{i,j}(x,y) \coloneqq \frac{1}{\deg(D) + 2} \prnths{4\tau(\G) + \frac{1}{2}\big(r_i(D,x)+r_j(D,y)\big)} - c_{\mu_D}\]
for any $i,j\in\{0,1,\dots,m-1\}$.

\subsection{Value matrix}

We can now compute the value matrix $\Zm_D = (z_{ij})$ of $g_{\mu_{D}}$ using the formulas in Theorems~\ref{thm:MainThm1} and \ref{thm:MainThm2}. The process of computing $z_{ij}$ is described in \cref{alg:ValueMatrix}. Repeating this algorithm for each $i$ and $j$ yields the matrix $\Zm_D$.

In \cref{alg:ValueMatrix}, we use the shorthands
\[\begin{aligned}
    T_{e_i,1} &\coloneqq y\frac{L_j - r(p_{e_j},q_{e_j}) + r(p_{e_i},q_{e_j}) - r(p_{e_i},p_{e_j})}{L_j} + x + r(p_{e_i},p_{e_j}),\\
    T_{e_i,2} &\coloneqq y\frac{L_j - r(p_{e_j},q_{e_j}) + r(q_{e_i},q_{e_j}) - r(q_{e_i},p_{e_j})}{L_j} - x  + L_i + r(q_{e_i},p_{e_j}),\\
    T_{e_j,1} &\coloneqq x\frac{L_i - r(p_{e_i},q_{e_i}) + r(p_{e_j},q_{e_i}) - r(p_{e_j},p_{e_i})}{L_i} + y + r(p_{e_j},p_{e_i}),\\
    T_{e_j,2} &\coloneqq x\frac{L_i - r(p_{e_i},q_{e_i}) + r(q_{e_j},q_{e_i}) - r(q_{e_j},p_{e_i})}{L_i} - y + L_j + r(q_{e_j},p_{e_i}).
\end{aligned}\]
Recall that the connectivity matrix $\Cm = (c_{ij})$ is a matrix with binary entries.

\bigskip

\begin{algorithm}[H]
	\caption{\bf Computing the matrix $\Zm_D = (z_{ij})$}
	\label{alg:ValueMatrix}

    \small

    \BlankLine
	\KwIn{
	\begin{itemize}
	    \item Metrized graph $\G$ with $V(\G) = \{p_0,p_1,\dots,p_{n-1}\}$ and $E(\G) = \{e_0,e_1,\dots,e_{m-1}\}$,
	    \item Divisor $D$ on $\G$, 
	    \item Integers $i,j \in \{0,1,\dots,m-1\}$.
	\end{itemize}
	}
	
	\KwOut{Function $z_{ij}$ which satisfies $z_{ij}(x,y) = g_{\mu_D}(x,y)$ for $x \in e_i$ and $y \in e_j$.}
	
	\BlankLine
	Compute the connectivity matrix $\Cm = (c_{ij})$ using \cref{alg:ConnectivityMatrix};
	
	\BlankLine
	Compute the function $(\tau_{D})_{i,j}$ as in \cref{subsec:TauFun};

	\BlankLine
	\uIf(){$i = j$}{
		\uIf(){$c_{ii} = 0$}{
			$z_{ii}(x,y) \leftarrow (\tau_{D})_{i,j}(x,y) - \dfrac{1}{2}\left|x-y\right| + \prnths{x-y}^2\dfrac{L_i-r(p_{e_i},q_{e_i})}{2L_i^2};$
		}\Else(){	
		$z_{ii}(x,y) \leftarrow (\tau_{D})_{i,j}(x,y) - \dfrac{1}{2}\left|x-y\right|$\;
		}
	}
	\Else(){
		\uIf(){$c_{ii}=0$ and $c_{jj}=0$}{
			$\begin{aligned}
				z_{ij}(x,y) \leftarrow (\tau_{D})_{i,j}(x,y) &+ x^2 \dfrac{L_i - r(p_{e_i},q_{e_i})}{2L_i^2} + y^2 \dfrac{L_j - r(p_{e_j},q_{e_j})}{2L_j^2}\\ 
				&- \dfrac{xy}{L_iL_j} \prnths{j_{p_{e_j}}(p_{e_i},q_{e_j}) - j_{p_{e_j}}(q_{e_i},q_{e_j})} -\dfrac{x}{2L_i}\prnths{L_i - 2j_{p_{e_i}}(q_{e_i},p_{e_j})} \\
				&- \dfrac{y}{2L_j}\prnths{L_j - 2j_{p_{e_j}}(p_{e_i},q_{e_j})} -\dfrac{1}{2}r(p_{e_i},p_{e_j});
			\end{aligned}$
			}
			\uElseIf(){$c_{ii} = 1$ and $c_{jj} = 0$}{
				\uIf(){$c_{ij} = 0$}{
					$z_{ij}(x,y) \leftarrow (\tau_{D})_{i,j}(x,y) + y^2 \dfrac{L_j - r(p_{e_j},q_{e_j})}{2L_j^2} - \dfrac{1}{2}T_{e_i,1}$;
				} \Else(){
					$z_{ij}(x,y) \leftarrow (\tau_{D})_{i,j}(x,y) + y^2 \dfrac{L_j - r(p_{e_j},q_{e_j})}{2L_j^2} - \dfrac{1}{2}T_{e_i,2};$
				}
				}
				\uElseIf(){$c_{ii} = 0$ and $c_{jj} = 1$}{
					\uIf(){$c_{ij} = 0$}{
						$z_{ij}(x,y) \leftarrow (\tau_{D})_{i,j}(x,y) + x^2 \dfrac{L_i - r(p_{e_i},q_{e_i})}{2L_i^2} - \dfrac{1}{2}T_{e_j,1}$;

					} \Else(){
						$z_{ij}(x,y) \leftarrow (\tau_{D})_{i,j}(x,y) + x^2 \dfrac{L_i - r(p_{e_i},q_{e_i})}{2L_i^2} - \dfrac{1}{2}T_{e_j,2}$;
					}

					}	\Else(){
						\uCase(){$c_{ij} \bmod 100 \equiv 0$}{
							$z_{ij}(x,y) \leftarrow (\tau_{D})_{i,j}(x,y)-\frac{1}{2}\prnths{x + y + r(p_{e_i},p_{e_j})}$\;
						}\uCase(){$c_{ij} \bmod100\equiv 1$}{
							$z_{ij}(x,y) \leftarrow (\tau_{D})_{i,j}(x,y) - \frac{1}{2}\prnths{x - y + L_j +  r(p_{e_i},q_{e_j})}$\;
						}\uCase(){$c_{ij} \bmod100\equiv 10$}{
							$z_{ij}(x,y) \leftarrow (\tau_{D})_{i,j}(x,y)- \frac{1}{2}\prnths{- x + y + L_i + r(q_{e_i},p_{e_j})}$\;
						}\Case(){$c_{ij} \bmod100\equiv  11$}{
							$z_{ij}(x,y) \leftarrow (\tau_{D})_{i,j}(x,y)- \frac{1}{2}\prnths{- x - y + L_i + L_j + r(q_{e_i},q_{e_j})}$\;
						}
					}
					}
\end{algorithm}

\subsection{Application: epsilon invariant}
\label{subsec:EpsInv}

In \cite[]{Moriwaki1,Moriwaki3,Moriwaki2,Moriwaki4}, Moriwaki studied a constant associated to $D$, denoted by $\eD$, that appears in effective Bogomolov's conjectures. Thanks to \cite[Lemma~4.1]{Moriwaki1}, we have a nice formula for $\eD$: for any point $p$ on $\G$, we have
\begin{equation}\label{eq:EpsInvMor}
\eD = \prnths{\deg(D) + 2} \sum_{k=0}^{n-1} a_kg_{\mu_D}(p,p_{k}) + \sum_{k=0}^{n-1} a_kr(p,p_{k})
\end{equation}
(recall that $D = \sum^{n-1}_{k=0}a_kp_k$).

\begin{definition}{\bf{[Epsilon invariant]}}
The constant $\eD$ is called the \textit{epsilon invariant} of $\G$ with respect to $D$.
\end{definition}

As an application of our algorithms, we give a method to compute $\eD$; see \cref{alg:Epsilon}. Our method exploits the fact that the formula in \cref{eq:EpsInvMor} is independent of the choice of $p$; in the algorithm, we choose $p=p_{e_0}$.

\bigskip

\begin{algorithm}[H]
	\caption{\bf Computing the invariant $\eD$}
	\label{alg:Epsilon}

	\SetKw{Break}{break}
	
	\BlankLine
	\KwIn{
	\begin{itemize}
	    \item Metrized graph $\G$ with $V(\G) = \{p_0,p_1,\dots,p_{n-1}\}$ and $E(\G) = \{e_0,e_1,\dots,e_{m-1}\}$,
	    \item Divisor $D = (a_0,a_1,\dots,a_{n-1})$ on $\G$.
	\end{itemize}
	}
	\KwOut{The invariant $\eD$.}
	
	\BlankLine
	Compute the value matrix $\Zm_D = (z_{ij})$ using \cref{alg:ValueMatrix};

	\BlankLine
	
	$\eD \coloneqq 0$;
	\BlankLine

	\For(){$k \in \{0,1,\dots,n-1\}$}{
		\For(){$l \in \{0,1,\dots,m-1\}$}{
			\uIf(){$p_k = p_{e_l}$}{
				$g_k \leftarrow z_{0l}(p_{e_0},p_{e_l})$\;
				\Break\;
			}

			\ElseIf(){$p_k = q_{e_l}$}{
				$g_k \leftarrow z_{0l}(p_{e_0},q_{e_l})$\;
				\Break\;
			}

		}
			$\eD \leftarrow \eD + a_k g_k$\;
	}

    \BlankLine

	$\eD \leftarrow (\deg(D)+2)\eD$\;
	
	\BlankLine

	$\eD \leftarrow \eD + \sum^{n-1}_{k=0} a_kr(p_{e_0},p_k)$\;
	
	\BlankLine
\end{algorithm}

\bigskip

In \cite[Theorem~4.27]{CinkirThesis}, Cinkir found an alternative expression for the invariant $\eD$, which does not involve the function $g_{\mu_D}$ and is easier to compute:
\begin{equation}\label{eq:EpsInvCin}
\eD = \frac{1}{\deg(D)+2}\prnths{4\tau(\G)\deg(D) + \sum^{n-1}_{k,l=0} a_ka_lr(p_k,p_l)}.
\end{equation}

In our implementation, we offer two different ways of computing $\eD$. One is based on \cref{alg:Epsilon}, and the second on considering \cref{eq:EpsInvCin}.

\subsection{Consistency checks}

In order to test the correctness of our algorithms, we do two quick experiments.

\begin{enumerate}
    \item Firstly, vertices of valency of greater than one can be represented in different ways, but their images under the entries of the value matrix should be independent of the choices of representations. More concretely, let $p$ and $q$ be two vertices of $\G$, and suppose $v(p) > 1$. Then, there should be at least two edges connected to $p$, call them $e_i$ and $e_j$. If the point $q$ belongs to an edge $e_k$, then we must have
    \[z_{ik}(p,q) = g_{\mu_D}(p,q) = z_{jk}(p,q)\]
    (recall \cref{def:ValueMatrix}). This test can be generalized to check any pair of vertices at least one of which has valence at least two. An script for this test is given in the file \texttt{FirstTest.sage}.
    \item Secondly, recall the formula for $g_{\mu_D}$ in \cref{eq:g_muD}. Notice that, using \cref{lem:jr_vertices}, one can immediately compute the function $g_{\mu_D}$ on vertices. Of course, these values should match the values computed with \cref{alg:ValueMatrix}. A script that runs this test on all vertices of a given metrized graph is provided in \texttt{SecondTest.sage}.
\end{enumerate}

\section{Computational examples}
\label{sec:CompExa}

In this final section, we present some examples computed using our implementation.

\subsection{Circle graph}

In this example, we compute the value matrix of $g_{\mu_\can}$, which is $g_{\mu_D}$ for $D=0$, on the metrized graph depicted in \cref{fig:CircleGraph}. 

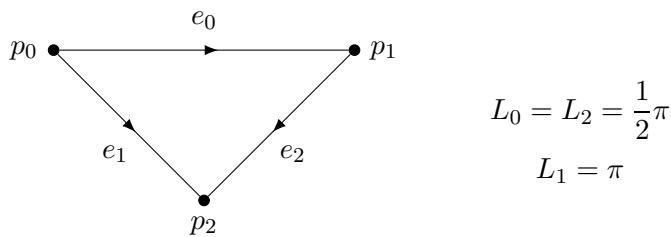
\begin{figure}[h]
	\centering
	\begin{tikzpicture}[decoration={markings,mark = at position 0.55 with {\arrow[thick]{latex}}}] 
		\node[vertex, label=left: $p_0$] (1) at (-2,2) {};
		\node[vertex, label=right:$p_1$] (2) at (2,2)  {};
		\node[vertex, label=below:$p_2$] (3) at (0,0)  {};
		
		\draw[postaction=decorate] (1) -- (2) node[midway, above=0.15]{$e_0$};
		\draw[postaction=decorate] (1) -- (3) node[pos=0.7, left=0.25]{$e_1$};
		\draw[postaction=decorate] (2) -- (3) node[pos=0.7, right=0.25]{$e_2$};
		
		\qquad
		
		\filldraw (5,1.2) node{$L_0 = L_2 = \dfrac{1}{2}\pi$};
		\filldraw (5,0.4) node{$L_1 = \pi$};
	\end{tikzpicture}
	\caption{A circle graph.}\label{fig:CircleGraph}
\end{figure}

Enumerate the vertices as $(p_1,p_0,p_2)$ and the edges as $(e_0,e_1,e_2)$. Then the discrete Laplacian matrix and its Moore--Penrose generalized inverse are
\begin{equation*}
	\Lm = \frac{1}{\pi}
	\begin{bmatrix}
		4 & -2 & -2 \\
		-2 & 3 & -1 \\
		-2 & -1 & 3
	\end{bmatrix}
	\quad\text{and}\quad
	\Lm^+ = \frac{\pi}{72} 
	\begin{bmatrix}
		8 & -4 & -4 \\
		-4 & 11 & -7 \\
		-4 & -7 & 11
	\end{bmatrix},
\end{equation*}
and the tau constant is $\tau(\G) = \frac{\pi}{6}$. The connectivity matrix is the $3\times3$ zero matrix since the metrized graph has no bridges. The value matrix is then given by $\Zm_D = (z_{ij})_{0\leq i,j\leq 2}$, where
\begin{align*}
z_{00}(x,y) & = \frac{1}{\pi}\prnths{\frac{1}{4}(x-y)^2 - \frac{\pi}{2}\left|x-y\right| + \frac{\pi^2}{6}}, \\ 
z_{01}(x,y) & = \frac{1}{\pi}\prnths{\frac{1}{4}(x+y)^2 - \frac{\pi}{2}(x+y) + \frac{\pi^2}{6}}, \\
z_{02}(x,y) & = \frac{1}{\pi}\prnths{\frac{1}{4}(x-y)^2 + \frac{\pi}{4}(x-y) - \frac{\pi^2}{48}}, \\
z_{10}(x,y) & = \frac{1}{\pi}\prnths{\frac{1}{4}(x+y)^2 - \frac{\pi}{2}(x+y) + \frac{\pi^2}{6}}, \\
z_{11}(x,y) & = \frac{1}{\pi}\prnths{\frac{1}{4}(x-y)^2 - \frac{\pi}{2}\left|x-y\right| + \frac{\pi^2}{6}}, \\ 
z_{12}(x,y) & = \frac{1}{\pi}\prnths{\frac{1}{4}(x+y)^2 - \frac{\pi}{4}(x+y) - \frac{\pi^2}{48}}, \\
z_{20}(x,y) & = \frac{1}{\pi}\prnths{\frac{1}{4}(x-y)^2 - \frac{\pi}{4}(x-y) - \frac{\pi^2}{48}}, \\
z_{21}(x,y) & = \frac{1}{\pi}\prnths{\frac{1}{4}(x+y)^2 - \frac{\pi}{4}(x+y) - \frac{\pi^2}{48}}, \\
z_{22}(x,y) & = \frac{1}{\pi}\prnths{\frac{1}{4}(x-y)^2 - \frac{\pi}{2}\left|x-y\right| + \frac{\pi^2}{6}}.
\end{align*}

The value matrix of $g_{\mu_\can}$ was already computed by Cinkir in \cite[Example~6.1]{Cinkir2014}. Our matrix is the same as his matrix except one entry, namely $z_{20}$. Our matrix is symmetric in the sense of \cref{rem:SymOfZD}, but his matrix is not. Therefore, his matrix contains a tiny error\footnote{His matrix is symmetric in the classical sense, which might have caused the error.}. The complete computation can be found in the file \texttt{Ex1.sage}. 

\subsection{Joint circles}

Let $C_1$ and $C_2$ denote two circles of lengths $\ell_1$ and $\ell_2$ respectively, which touch each other externally at a point $p_0$; see \cref{fig:JointCircles}. Define the metrized graph $\G$ to be the circles' union, and let a divisor on $\G$ be given by $D = 2p_0$. Our goal in this example is to compute the value matrix $\Zm_D$ of $g_{\mu_D}$.

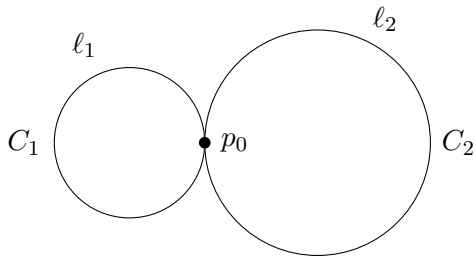
\begin{figure}[h]
	\centering
	\begin{tikzpicture}
		\def \radiusone {1cm}
		\def \radiustwo {1.5cm}
		\draw (-\radiusone,0) circle (\radiusone);
		\draw (\radiustwo,0) circle (\radiustwo);
		\filldraw (0,0) circle (2pt);
		\node at (.4,0) {$p_0$};
		\node at (-2.4*\radiusone,0) {$C_1$};
		\node at (2.25*\radiustwo,0) {$C_2$};
		\node at (-1.6,1.3) {$\ell_1$};
		\node at (2.4,1.65) {$\ell_2$};
	\end{tikzpicture}
	\caption{A metrized graph consisting of two circles joint at $p_0$.}\label{fig:JointCircles}
\end{figure}

Clearly, the given vertex set $V(\G) = \{p_0\}$ is not adequate, but an adequate vertex set can easily be obtained by adjoining two additional points on each circle to the given vertex set. The new metrized graph is illustrated in Figure \ref{fig:JointCirclesAdeq}.

\begin{figure}[h]
	\centering
	\begin{tikzpicture}[decoration={markings,mark = at position 0.55 with {\arrow[thick]{latex}}}] 
		\node[vertex, label=above:$p_0$] (0) at (0,0) {};
		\node[vertex, label=left:$p_1$] (1) at (-2,-1) {};
		\node[vertex, label=left:$p_2$] (2) at (-2,1) {};
		\node[vertex, label=right:$p_3$] (3) at (2,-1) {};
		\node[vertex, label=right:$p_4$] (4) at (2,1) {};

		\draw [postaction=decorate] (0) -- (1) node[pos=.4,below=0.2]{$e_0$};
		\draw [postaction=decorate] (1) -- (2) node[pos=.5,left=0.1]{$e_1$};
		\draw [postaction=decorate] (2) -- (0) node[pos=.6,above=0.2]{$e_2$};

		\draw [postaction=decorate] (0) -- (3) node[pos=.4,below=0.2]{$e_3$};
		\draw [postaction=decorate] (3) -- (4) node[pos=.5,right=0.1]{$e_4$};
		\draw [postaction=decorate] (4) -- (0) node[pos=.6,above=0.2]{$e_5$};
		
		\qquad
		
		\filldraw (6,0.6) node{$L_0=L_1=L_2=\dfrac{\ell_1}{3}$};
		\filldraw (6,-0.6) node{$L_3=L_4=L_5=\dfrac{\ell_2}{3}$};
	\end{tikzpicture}
	\caption{The metrized graph in \cref{fig:JointCircles} with an adequate vertex set.}\label{fig:JointCirclesAdeq}
\end{figure}

The discrete Laplacian matrix is
\begin{equation*}
	\Lm = \frac{3}{\ell_1\ell_2}
	\begin{bmatrix}
		2\ell_1 + 2\ell_2 & -\ell_2 & -\ell_2 & -\ell_1 & -\ell_1\\
		-\ell_2 & 2\ell_2 & -\ell_2 & 0 & 0 \\
		-\ell_2 & -\ell_2 & 2\ell_2 & 0 & 0\\
		-\ell_1 & 0 & 0 & 2\ell_1 & -\ell_1\\
		-\ell_1 & 0 & 0 & -\ell_1 & 2\ell_1
	\end{bmatrix},
\end{equation*}
and its Moore--Penrose generalized inverse is
\begin{equation*}
	\Lm^+ = \frac{\ell_1}{225}
	\begin{bmatrix}
		6 & -9 & -9 & 6 & 6 \\
		-9 & 26 & 1 & -9 & -9 \\
		-9 & 1 & 26 & -9 & -9 \\
		6 & -9 & -9 & 6 & 6 \\
		6 & -9 & -9 & 6 & 6
	\end{bmatrix}
	+ \frac{\ell_2}{225}
	\begin{bmatrix}
		6 & 6 & 6 & -9 & -9 \\
		6 & 6 & 6 & -9 & -9 \\
		6 & 6 & 6 & -9 & -9 \\
		-9 & -9 & -9 & 26 & 1 \\
		-9 & -9 & -9 & 1 & 26
	\end{bmatrix}.
\end{equation*}
The tau constant is $\tau(\G) = \frac{1}{12}(\ell_1+\ell_2)$. The divisor $D$ is $(2,0,0,0,0)$, and the value matrix of $g_{\mu_D}$ is
\[\Zm_D = \begin{bmatrix} M_1(x,y) & W(x,y) \\ W^T(y,x) & M_2(x,y)\end{bmatrix},\]
where
\begin{equation*}
	\begin{aligned}
		M_i(x,y) = &\ \frac{1}{4\ell_i}\prnths{x^2+y^2-4xy}\Jm_3 + \frac{1}{12}
	\begin{bmatrix}
		3x+3y  & 5x - y & x + y \\
		-x + 5y & x+y  & 3x - 3y \\
		x + y & -3x + 3y & -x -y
	\end{bmatrix}\\
	&\ - \frac{1}{2}\left|x-y\right|\Im_3 + \frac{\ell_j}{48} \Jm_3 + \frac{\ell_i}{144}
	\begin{bmatrix}
		3 & -5& -5\\
		-5 & 19& 3\\
		-5& 3& 19
	\end{bmatrix}
	\end{aligned}
\end{equation*}
with $j \in \{1,2\} \setminus \{i\}$, and
\begin{equation*}
	\begin{aligned}
		W(x,y) = &\ \frac{1}{4\ell_1\ell_2}\prnths{\ell_2x^2+\ell_1y^2}\Jm_3 -  \frac{1}{12}
		\begin{bmatrix}
			3x + 3y & 3x + y & 3x - y\\
			x + 3y & x + y & x - y\\
			-x +3y & -x+y & -x-y
		\end{bmatrix}\\
	    &\ + \frac{\ell_1}{144}
		\begin{bmatrix}
			3 & 3 & 3\\
			-5 & -5 & -5\\
			-5 & -5 & -5
		\end{bmatrix}
		+ \frac{\ell_2}{144}
		\begin{bmatrix}
			3 & -5 & -5\\
			3 & -5 & -5\\
			3 & -5 & -5
		\end{bmatrix}.\\
	\end{aligned}
\end{equation*}
Here, $\Im_3$ is the $3\times 3$ identity matrix, and $\Jm_3$ is the $3\times 3$ matrix whose entries are all $1$.

The $36$ entries of $\Zm_D$ coincide with the formulas that Moriwaki found in \cite[Section 3]{Moriwaki2}. The code can be found in \texttt{Ex2.sage}.

\subsection{Epsilon invariant on a tesseract}

In this example, we compute an epsilon invariant on the metrized graph $\G$ formed by a tesseract using the two different expressions, namely Equations~\eqref{eq:EpsInvMor} and \eqref{eq:EpsInvCin}, and check if the results indeed coincide.

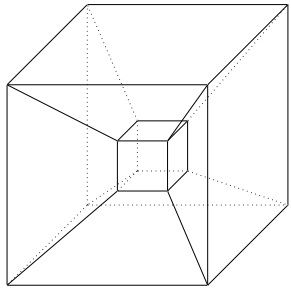
\begin{figure}[h]
	\centering
	
	\scalebox{0.7}{
	
	\begin{tikzpicture}[every node/.style={circle, draw, fill, minimum size=0pt, inner sep=0pt}]
		\pgfsetxvec{\pgfpoint{.3in}{0in}}
		\pgfsetyvec{\pgfpoint{0in}{.3in}}

		\foreach \point / \id in {
			% outer cube
			(0,0)/0001,
			(0,5)/0011,
			(5,0)/1001,
			(5,5)/1011,
			(2,2)/0101,
			(2,7)/0111,
			(7,2)/1101,
			(7,7)/1111,
			% inner cube
			(2.75,2.35)/0000,
			(2.75,3.60)/0010,
			(4.00,2.35)/1000,
			(4.00,3.60)/1010,
			(3.25,2.85)/0100,
			(3.25,4.10)/0110,
			(4.50,2.85)/1100,
			(4.50,4.10)/1110}
		{
			\node (\id) at \point {};
		}

		\path 
		(0011) edge (1011) edge (0111) edge (0001)
		(1001) edge (0001) edge (1101) edge (1011)
		(1111) edge (1101) edge (1011) edge (0111)
		(0010) edge (1010) edge (0110) edge (0000)
		(1000) edge (0000) edge (1100) edge (1010)
		(1110) edge (1100) edge (1010) edge (0110)
		(0000) edge (0001)
		(0010) edge (0011)
		(1000) edge (1001)
		(1010) edge (1011);

		\path[dotted]
		(0101) edge (1101) edge (0001) edge (0111)
		(0100) edge (1100) edge (0000) edge (0110)
		(0100) edge (0101)
		(1100) edge (1101)
		(0110) edge (0111)
		(1110) edge (1111);
	\end{tikzpicture}
	}
	
	\caption{A three-dimensional representation of a tesseract.}\label{fig:Tesseract}
\end{figure}

The vertices and edges of $\G$ are the usual vertices and edges of a tesseract, see \cref{fig:Tesseract}. More concretely,
\[V(\G) = \big\{(b_0,b_1,b_2,b_3) \mid b_i \in \{0,1\}\big\} \subset \mathbb{R}^4,\]
where we enumerate the sixteen vertices as 
\[(b_0,b_1,b_2,b_3) = p_{b_0 + 2b_1 + 4b_2 + 8b_3}.\]
The corresponding edge set is then given by
\[E(\G) = \big\{(p_i,p_j) \mid i<j \text{ and } \left|p_i-p_j\right| = 1\big\},\]
and each edge has length $1$. For the divisor $D = \sum_{k=0}^{15} kp_k$,
the two different expressions for $\eD$ yield, unsurprisingly, the same result:
\[\eD = \frac{7875}{122}.\]
See the file \texttt{Ex3.sage}.

\subsection{Epsilon invariant on a banana graph}
\label{subsec:ExBanana}

Consider the metrized graph $\G$ illustrated in \cref{fig:BananaGraph}, and define
\[\bq\colon \G\to \Z,\ \ \ s\mapsto 0.\]
The corresponding canonical divisor is
\[K = p_0 + p_1,\]
and therefore the pair $(\G,\bq)$ is a polarized metrized graph; see \cref{def:PolMetGr}. 

\begin{figure}[h]
	\centering
	\begin{tikzpicture}
		\node[vertex, label=left:$p_0$] (0) at (-2,0) {};
		\node[vertex, label=right:$p_1$] (1) at (2,0) {};

		\draw (0) to [out=80,in=100,looseness=1] (1) node[midway, above=1.2]{$a$};
		\draw (0) to (1) node[midway, above=0]{$b$};
		\draw (0) to [out=-80,in=-100,looseness=1] (1) node[midway, below=0.7]{$c$};
	\end{tikzpicture}
	\caption{A metrized ``banana'' graph.}\label{fig:BananaGraph}
\end{figure}
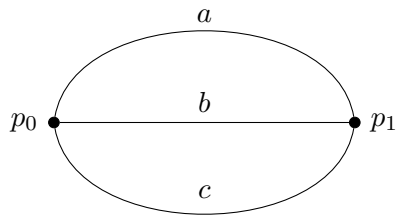

The epsilon invariant $\epsilon_K$ corresponding to the canonical divisor $K$ was computed by Moriwaki in \cite[Section~3, Type~VII]{Moriwaki3}, and his value is
\[\epsilon_K = \frac{2}{27}\prnths{a+b+c} + \frac{abc}{ab + ac + bc}.\]
Cinkir noticed that this value is wrong, and the correct value should be
\[\epsilon_K = \frac{1}{6}\prnths{a+b+c + \frac{abc}{ab + ac + bc}};\]
see \cite[Example~4.35 and Remark~4.36]{CinkirThesis}. 

In this final example, we compute the invariant $\epsilon_K$ using our techniques. The value we obtain is the same as Cinkir's value. The graph we use in our computation is represented in \cref{fig:BananaGraphAdeq}, and the code for this example can be found in \texttt{Ex4.sage}.

\begin{figure}[h]
	\centering
	\begin{tikzpicture}[decoration={markings,mark = at position 0.55 with {\arrow[thick]{latex}}}] 
		\node[vertex, label=left:$p_0$] (0) at (-2,0) {};
		\node[vertex, label=right:$p_1$] (1) at (2,0) {};

		\node[vertex, label=above:$p_2$] (2) at (0,1) {};
		\node[vertex, label=below:$p_3$] (3) at (0,-1) {};

		\draw [postaction=decorate] (0) -- (1) node[pos=0.5,above=0.1]{$e_0$};
		\draw [postaction=decorate] (0) -- (2) node[pos=0.45,above=0.1]{$e_1$};
		\draw [postaction=decorate] (2) -- (1) node[pos=0.55,above=0.1]{$e_2$};
		\draw [postaction=decorate] (0) -- (3) node[pos=0.45,below=0.1]{$e_3$};
		\draw [postaction=decorate] (3) -- (1) node[pos=0.55,below=0.1]{$e_4$};
		
		\qquad
		
		\filldraw (6,0.75) node{$L_1=L_2 = a/2$};
		\filldraw (6,0) node{$L_0=b$};
		\filldraw (6,-0.75) node{$L_3=L_4 = c/2$};
		
	\end{tikzpicture}
	\caption{The metrized graph in \cref{fig:BananaGraph} with an adequate vertex set.}\label{fig:BananaGraphAdeq}
\end{figure}

\addcontentsline{toc}{section}{References}
\printbibliography

\bigskip

\noindent{\bf Authors' addresses:}

\bigskip

\noindent Ruben Merlijn van Dijk, Bernoulli Institute, University of Groningen, Nijenborgh 9, 9747 AG, Groningen, The Netherlands.
\hfill \url{rvndijk@proton.me} 

\vspace{4mm}

\noindent Enis Kaya, Max Planck Institute for Mathematics in the Sciences, Inselstraße 22, 04103, Leipzig, Germany.
\hfill \url{enis.kaya@mis.mpg.de}

\end{document}